\def\<{\left < }
\def\>{\right >}
\def\({\left ( }
\def\){\right )}
\newcommand{\de}{\mathrm{d}}
\DeclareMathOperator{\spanned}{span}
\DeclareMathOperator{\nor}{nor}
\DeclareMathOperator{\hor}{hor}
\DeclareMathOperator{\dd}{d}
\newtheorem{theorem}{Theorem}[section]   
\newtheorem*{theorem*}{Theorem}          
\newtheorem{lemma}[theorem]{Lemma}
\newtheorem{proposition}[theorem]{Proposition}
\newtheorem*{genericthm*}{\thistheoremname}
\theoremstyle{definition}
\newtheorem{definition}[theorem]{Definition}
\newtheorem*{remark}{Remark}
\newcommand{\thistheoremname}{}
\theoremstyle{plain}
\newtheorem*{namedthm}{\namedthmname}
\newcounter{namedthm}
\newenvironment{named}[1]
  {\def\namedthmname{#1}%
   \refstepcounter{namedthm}%
   \namedthm\def\@currentlabel{#1}}
  {\endnamedthm}
\title[Minimal $\delta(2)$-ideal Lagrangian submanifolds and the Quaternionic projective space]{Minimal $\boldsymbol{\delta}\boldsymbol{(}\boldsymbol{2}\boldsymbol{)}$-ideal Lagrangian submanifolds and the Quaternionic projective space}
\author{Kristof Dekimpe, Joeri Van der Veken and Luc Vrancken}
\thanks{All three authors are supported by the Research Foundation--Flanders (FWO) and the National Natural Science Foundation of China (NSFC) under collaboration project G0F2319N. J. Van der Veken is also supported  
by the KU Leuven Research Fund under project 3E210539 and by the Research Foundation--Flanders (FWO) and the Fonds de la Recherche Scientifique (FNRS) under EOS Projects G0H4518N and G0I2222N}
\address{{(Kristof Dekimpe, Joeri Van der Veken, Luc Vrancken)\;}KU Leuven, Department of Mathematics, Celestijnenlaan 200B - Box 2400, 3001 Leuven, Belgium}
\email{kristof.dekimpe@kuleuven.be}
\email{joeri.vanderveken@kuleuven.be}
\email{luc.vrancken@kuleuven.be}
\address{{(Luc Vrancken)\;}{LMI-Laboratoire de Mathématiques pour l’Ingénieur, Université Polytechnique Hauts-de-France, Campus du Mont Houy, 59313 Valenciennes Cedex 9, France}}
\email{luc.vrancken@univ-valenciennes.fr}
\begin{document}

\maketitle
\begin{abstract}We construct an explicit map from a generic minimal $\delta(2)$-ideal Lagrangian submanifold of $\mathbb{C}^n$ to the quaternionic projective space $\mathbb{H}P^{n-1}$, whose image is either a point or a minimal totally complex surface. A stronger result is obtained for $n=3$, since the above mentioned map then provides a one-to-one correspondence between minimal $\delta(2)$-ideal Lagrangian submanifolds of $\mathbb{C}^3$ and minimal totally complex surfaces in $\mathbb{H}P^2$ which are moreover anti-symmetric. Finally, we also show that there is a one-to-one correspondence between such surfaces in $\mathbb{H}P^2$ and minimal Lagrangian surfaces in $\mathbb{C}P^2$.

\end{abstract}
\section*{Introduction}
One of the most fundamental problems in the theory of submanifolds is the question of immersibility of a Riemannian manifold in a Euclidean space. The famous 1956 embedding theorem of Nash \cite{Nash} states that every Riemannian manifold can be isometrically embedded into a Euclidean space of sufficiently high dimension. This theorem presented the hope that seeing any Riemannian manifold as a Riemannian submanifold of a bigger Euclidean space would enable the use of extrinsic geometry to study the (intrinsic) geometry of the Riemannian manifold. Unfortunately, this approach had not yet yielded the expected results in 1985 according to Gromov \cite{Gromov}. This was mostly due to the lack of control over the extrinsic geometry of the submanifolds in terms of the known intrinsic geometric invariants. This was the main motivation for Chen to introduce a new type of curvature invariants, the so-called $\delta$-invariants, together with inequalities involving these new intrinsic invariants and the mean curvature vector of an isometric immersion. For a detailed overview of the study on Chen's $\delta$-invariants we refer to his book \cite{book}. 

The aforementioned inequalities become very interesting in the case of a Lagrangian submanifold $M^n$ immersed in a complex space form $\tilde{M}^n(4c)$, see \cite{FinalSolution}. Recall that an isometric immersion $~{M^n\rightarrow \tilde{M}^n(4c)}$ is Lagrangian if the complex structure $J$ of $\tilde{M}^n(4c)$ is an isomorphism between the tangent space and the normal space of the submanifold $M^n$ at any of its points. In this paper the focus will be on the first $\delta$-invariant, nowadays denoted by $\delta(2)$. A Lagrangian submanifold which attains equality in the inequality at all of its points is called a \textit{$\delta(2)$-ideal} Lagrangian submanifold. An overview of ideal Lagrangian submanifolds can be found in \cite{ideal}.

An important concept in the study of d(2)-ideal submanifolds is the distribution
\begin{align}\label{eq:IntroD1}
\mathcal{D}_1=\spanned\left\{ E_1, E_2\right\},
\end{align}
where $E_1$ and $E_2$ are orthonormal vector fields spanning the plane of minimal sectional curvature at every point. Classification results can be obtained under integrability conditions of this distribution, see for example \cite{ideal} for the Lagrangian case. 

Another class of submanifolds, which will be featured in this paper, are the totally complex submanifolds of the quaternionic projective space $\mathbb{H}P^{n-1}$. Recall that $\mathbb{R}^{4n}\cong \mathbb{H}^{n}$ has a standard quaternionic structure $\left\{i, j, k \right\}$ and that $\mathbb{H}P^{n-1}$ gains a local basis $\left\{I, J, K\right\}$ of the quaternionic vector bundle $V$ through the Hopf fibration $\pi: S^{4n-1}\rightarrow\mathbb{H}P^{n-1}$. We call a triple $\left\{J_1, J_2, J_3\right\}$ an adapted basis of a submanifold $N$ of $\mathbb{H}P^{n-1}$ if it forms a local basis for the quaternionic vector bundle $V$, when restricted to the tangent space $TN$. A submanifold $(N, J_1)$ of $\mathbb{H}P^{n-1}$, with adapted basis $\left\{J_1, J_2, J_3\right\}$, is then called a \textit{totally complex submanifold} if the tangent space $TN$ is invariant under $J_1$ and $J_2TN$ is orthogonal to $TN$.

The aim of this paper is to find a correspondence between $\delta(2)$-ideal submanifolds of the complex space form $\mathbb{C}^n$ and totally complex submanifolds of the quaternionic projective space $\mathbb{H}P^{n-1}$. A specific map is constructed from a generic minimal $\delta(2)$-ideal Lagrangian submanifold $M^n$ of $\mathbb{C}^n$ to $\mathbb{H}P^{n-1}$, by considering the special vector fields $E_1$ and $E_2$ spanning the distribution $\mathcal{D}_1$ on $M^n$. It is then shown that this map is either a constant map or that its image is a minimal totally complex surface in $\mathbb{H}P^{n-1}$, depending on whether $\mathcal{D}_1$ is totally geodesic or not. 
\begin{named}{Main Theorem 1}\label{MT1}
Let $M^n\subset \mathbb{C}^n$ be a minimal $\delta(2)$-ideal Lagrangian submanifold. Consider the map $\psi:M^n\rightarrow \mathbb{H}P^{n-1}:p\mapsto[E_1(p)+iE_2(p)]$, where $i$ is the canonical complex structure on $\mathbb{C}^{2n}$ and $E_1$ and $E_2$ are the vector fields spanning distribution $\mathcal{D}_1$ as in (\ref{eq:IntroD1}).
\begin{itemize}
\item[(1)] If the distribution $\mathcal{D}_1$ is totally geodesic, then $\psi$ is a constant map.
\item[(2)] Otherwise, the image of $\psi$ is a minimal totally complex surface in $\mathbb{H}P^{n-1}$. 
\end{itemize}
\end{named}

The other main results of this paper are obtained when the dimension $n$ equals 3. We introduce a new class of totally complex submanifolds, the \textit{anti-symmetric totally complex} submanifolds. These are totally complex submanifolds $(N, J_1)$, with an adapted basis $\left\{J_1, J_2, J_3 \right\}$, satisfying an additional condition on their second fundamental form $h$, namely that the maps $(X,Y, Z)\mapsto g( h(X,Y),J_2 Z )$ and $(X, Y, Z)\mapsto g( h(X,Y),J_3 Z )$ are anti-symmetric in the last two indices, with $g$ the metric tensor on $\mathbb{H}P^2$ and $X,Y,Z$ vector fields on $N^2$. 

The following theorems then provide a correspondence between these surfaces in $\mathbb{H}P^2$ and 3-dimensional minimal $\delta(2)$-ideal Lagrangian submanifolds in $\mathbb{C}^3$. 
\begin{named}{Main Theorem 2A}\label{MT2A}
Let $M^3\subset \mathbb{C}^3$ be a minimal $\delta(2)$-ideal Lagrangian submanifold, with an integrable but not totally geodesic distribution $\mathcal{D}_1$ as in (\ref{eq:IntroD1}). The image of $M^3$ under the map $\psi$, defined in \ref{MT1}, is a minimal totally complex surface in $\mathbb{H}P^2$, which is moreover anti-symmetric. Conversely, for every minimal anti-symmetric totally complex surface in $\mathbb{H}P^2$, there exists a 3-dimensional minimal $\delta(2)$-ideal Lagrangian submanifold $M^3\subset\mathbb{C}^3$, with an integrable but not totally geodesic distribution $\mathcal{D}_1$, such that $\psi(M^3)$ is the given surface.
\end{named}
An analogous result is also obtained when the distribution $\mathcal{D}_1$ is nowhere integrable.
\begin{named}{Main Theorem 2B}\label{MT2B}
Let $M^3\subset \mathbb{C}^3$ be a minimal $\delta(2)$-ideal Lagrangian submanifold, with nowhere integrable distribution $\mathcal{D}_1$ as in (\ref{eq:IntroD1}). The image of $M^3$ under the map $\psi$, defined in \ref{MT1}, is a minimal totally complex surface in $\mathbb{H}P^2$, which is moreover anti-symmetric. Conversely, for every minimal anti-symmetric totally complex surface in $\mathbb{H}P^2$ and every eigenfunction of the Laplace-Beltrami operator with eigenvalue $-4$, there exists a 3-dimensional minimal $\delta(2)$-ideal Lagrangian submanifold $M^3\subset\mathbb{C}^3$, with a nowhere integrable distribution $\mathcal{D}_1$, such that $\psi(M^3)$ is the given surface.
\end{named}

This paper is structured as follows: we start with some preliminaries, focussing on defining $\delta(2)$-ideal submanifolds, quaternionic K\"{a}hler manifolds and their relevant submanifolds. In Section \ref{section:MinimalQuaternionic} a proof is given for \ref{MT1}. The map $\psi$ between minimal $\delta(2)$-ideal Lagrangian submanifolds of $\mathbb{C}^n $ and the quaternionic projective space $\mathbb{H}P^{n-1}$ is constructed and it is shown that this is either a constant map or defines a minimal totally complex surface in $\mathbb{H}P^{n-1}$. In Section \ref{section:Dim3} the special case of dimension $n=3$ is considered. It is shown that $\psi$ maps a $3$-dimensional minimal $\delta(2)$-ideal Lagrangian submanifold to a minimal anti-symmetric totally complex surface in $\mathbb{H}P^2$. By completely characterizing the minimal anti-symmetric totally complex surfaces in $\mathbb{H}P^2$, we can then present the proof of \ref{MT2A} and of \ref{MT2B}. Section \ref{Section:reformulation} then finally gives a one-to-one correspondence between these surfaces in $\mathbb{H}P^2$ and minimal Lagrangian surfaces in the complex projective plane $\mathbb{C}P^2$, allowing us to reformulate \ref{MT2A} and \ref{MT2B}, cfr. \ref{MT2A'} and \ref{MT2B'}.
\section{Preliminaries}
In this section we recall some basic definitions, properties and formulas regarding Riemannian submanifolds, $\delta(2)$-ideal Lagrangian submanifolds and quaternionic K\"{a}hler manifolds. More specifically, we will elaborate on the construction of the quaternionic projective space $\mathbb{H}P^n$ as a quaternionic space form.
\subsection{Riemannian submanifolds}
Let $M$ be a Riemannian submanifold of an ambient Riemannian manifold $(\tilde{M},g)$. The following formulas and notations will be extensively used throughout this paper. Denote by $\nabla$ and $\tilde{\nabla}$ the Levi-Civita connections on $M$ and $\tilde{M}$, respectively.  The formulas of Gauss and Weingarten respectively state that
\begin{align}
& \widetilde{\nabla}_X Y=\nabla_X Y + h(X,Y), \label{Gaussformula}\\
& \widetilde{\nabla}_X \xi = -A_{\xi}X + \nabla^{\perp}_X \xi\label{Weingarten}
\end{align}
for vector fields $X$ and $Y$ tangent to $M$ and a vector field $\xi$ normal to $M$. Here, $h$ is the second fundamental form, taking values in the normal bundle, $A_{\xi}$ is the shape operator associated to the normal vector field $\xi$ and $\nabla^{\perp}$ is the normal connection. The mean curvature vector field $H$ is defined as the averaged trace of the second fundamental form $h$. A submanifold $M$ is a minimal submanifold if the mean curvature vector field $H$ is everywhere zero.
The second fundamental form and the shape operator are related by
\begin{align*}
g(h(X,Y),\xi)=g(A_\xi X,Y),
\end{align*}
while the normal connection is used in defining the covariant derivative of the second fundamental form as
\begin{align*}
(\overline{\nabla}h)(X,Y,Z)=\nabla^\perp_X h(Y,Z)-h(\nabla_XY)-h(Y,\nabla_x Z),
\end{align*}
for all vector fields $X$, $Y$ and $Z$ tangent to $M$.
Denote by $X^T$ and $X^\perp$ the tangent and normal part of a vector field $X$ with respect to the submanifold $M$. The equations of Gauss, Codazzi and Ricci will also be used in this paper and can now respectively be stated as 
\begin{align}
(\tilde{R}(X,Y)Z)^T&=R(X,Y)Z +A_{h(X,Z)}Y- A_{h(Y,Z)}X \label{Gaussequation},\\
(\tilde{R}(X,Y)Z)^\perp&=(\overline{\nabla}h)(X,Y,Z)-(\overline{\nabla}h)(Y,X,Z)\label{Codazzi},\\
(\tilde{R}(X,Y)\xi)^\perp&=R^\perp(X,Y)\xi+h(A_\xi X,Y)-h(X, A_\xi Y),\label{Ricci}
\end{align}
for tangent vector fields $X,Y,Z$ and normal vector field $\xi$. Here, $R$ and $\tilde{R}$ are the curvature tensors of $M$ and $\tilde{M}$, respectively, while $R^\perp$ is the normal curvature tensor. 
\subsection{$\mathbf{\boldsymbol{\delta}(2)}$-ideal Lagrangian immersions}
It is possible to define a Riemannian invariant, denoted by $\delta(2)$, for an arbitrary Riemannian manifold $M^n$ of dimension $n\geq 3$, as detailed in \cite{Obstruction}. This invariant is defined at each point $p\in M^n$ by
\begin{align*}
\delta(2)(p)=\tau(p)-\inf\left\{K(\pi)\;\vert\; \pi\subset T_p M \text{ is a plane} \right\},
\end{align*}
where $K$ denotes the sectional curvature of $M^n$ and $\tau$ is the scalar curvature of $M^n$, defined as
\begin{align*}
\tau(p)=\sum_{i<j}K(e_i\wedge e_j),
\end{align*}
with $\left\{e_1,\ldots,  e_n \right\}$ an orthonormal basis of $T_p M^n$.
Note that the $\delta(2)$ curvature is just one in a sequence of invariants constructed by B. Y. Chen \cite{book}.
For Lagrangian submanifolds $M^n$ of $n$-dimensional complex space forms $\tilde{M}^n(4c)$ with constant holomorphic sectional curvature $4c$, the following inequality regarding the $\delta(2)$ curvature invariant of the submanifold $M^n$ was proven in \cite{FinalSolution}.
\begin{theorem}
Let $M^n\subset \tilde{M}^n(4c)$, with $n\geq3$, be a Lagrangian submanifold. Then the following inequality holds at every point of $M^n$:
\begin{align}\label{eq:ineqdelta}
\delta(2)\leq\frac{n^2(2n-3)}{2(2n+3)}\left \| H \right \|^2+\frac{1}{2}(n+1)(n-2)c,
\end{align}
with $H$ the mean curvature vector field of $M^n$. 
\end{theorem}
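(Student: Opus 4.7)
The plan is to derive the inequality from the Gauss equation together with two special features of the Lagrangian setting: the tangential simplification of the ambient curvature tensor, and the total symmetry of the cubic form associated with the second fundamental form. Concretely, in $\tilde{M}^n(4c)$ the curvature tensor has, in addition to the constant-sectional-curvature piece, terms involving $J$. However, whenever $X,Y,Z$ are all tangent to the Lagrangian submanifold $M^n$, we have $g(JX,Y)=g(JY,Z)=g(JX,Z)=0$, so that $(\tilde R(X,Y)Z)^T = c(g(Y,Z)X - g(X,Z)Y)$. Combined with the fact that $T(X,Y,Z):=g(h(X,Y),JZ)$ is totally symmetric in $X,Y,Z$ (a standard consequence of $\tilde\nabla J=0$ and $JTM = T^\perp M$), we have all of the structural input we need.

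The next step is to unpack the Gauss equation \eqref{Gaussequation} at a point $p\in M^n$ in an orthonormal frame $\{e_1,\dots,e_n\}$. A direct computation gives
\begin{equation*}
2\tau(p) \;=\; n(n-1)c + n^2\|H\|^2 - \|h\|^2, \qquad K(e_1\wedge e_2) \;=\; c + g(h(e_1,e_1),h(e_2,e_2)) - |h(e_1,e_2)|^2 .
\end{equation*}
I would then choose the frame so that $\{e_1,e_2\}$ spans a plane on which $K$ attains its infimum, so that $\delta(2)(p) = \tau(p) - K(e_1\wedge e_2)$. Substituting and simplifying, the desired inequality \eqref{eq:ineqdelta} becomes the purely algebraic statement
\begin{equation*}
\|h\|^2 + 2\,g(h(e_1,e_1),h(e_2,e_2)) - 2\,|h(e_1,e_2)|^2 \;\geq\; \frac{6n^2}{2n+3}\,\|H\|^2.
\end{equation*}
The constant $(n+1)(n-2)c$ cancels cleanly on both sides, so the claim is reduced to a sharp quadratic estimate on the symmetric coefficients $h^k_{ij} := g(h(e_i,e_j),Je_k)$, which are totally symmetric in $i,j,k$.

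For the algebraic inequality I would set $e_{n+i}=Je_i$ and rewrite everything as a quadratic form in the variables $h^k_{ij}$ with linear constraint fixed by $H$. The total symmetry of $h^k_{ij}$ is what makes the Lagrangian bound strictly stronger than the generic Chen bound, because it forces many off-diagonal contributions to $\|h\|^2$ to re-appear in the cross-term $g(h(e_1,e_1),h(e_2,e_2))$. The standard route, following Chen--Dillen--Van der Veken--Vrancken, is a Lagrange-multiplier argument: minimize $\|h\|^2 + 2g(h(e_1,e_1),h(e_2,e_2)) - 2|h(e_1,e_2)|^2$ over all totally symmetric $h^k_{ij}$ subject to the normalization that the mean curvature vector is fixed, and verify that the optimum equals $\frac{6n^2}{2n+3}\|H\|^2$.

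The main obstacle is precisely this final algebraic optimization: one must identify the correct equality model (a specific pointwise symmetric tensor parameterized by one direction of $H$) and verify that the second-order conditions are met, which is where the non-obvious constant $\frac{2n-3}{2n+3}$ arises. Once the algebraic bound is established, the proof is complete since the inequality then holds pointwise on $M^n$. Equality characterizes the \emph{$\delta(2)$-ideal} Lagrangian submanifolds, whose structure is read off from the equality case in the Lagrange-multiplier step.
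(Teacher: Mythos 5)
Your reduction is set up correctly: the tangential part of the ambient curvature of $\tilde M^n(4c)$ is indeed $c\bigl(g(Y,Z)X-g(X,Z)Y\bigr)$ along a Lagrangian submanifold, the Gauss-equation identities for $2\tau$ and $K(e_1\wedge e_2)$ are right, the constant $\frac{1}{2}(n+1)(n-2)c$ does cancel, and the theorem is equivalent to the pointwise algebraic statement
\begin{equation*}
\|h\|^2+2\,g\bigl(h(e_1,e_1),h(e_2,e_2)\bigr)-2\,\|h(e_1,e_2)\|^2\;\geq\;\frac{6n^2}{2n+3}\,\|H\|^2
\end{equation*}
for totally symmetric coefficients $h^k_{ij}=g(h(e_i,e_j),Je_k)$. (For the record, the paper you are reading does not prove this theorem at all; it quotes it from the reference cited as ``FinalSolution'', and your strategy is the same one used in that literature: Gauss equation plus an optimization over totally symmetric cubic forms.)

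The genuine gap is that you never prove this algebraic inequality, and it is the entire content of the theorem: the constant $\frac{2n-3}{2n+3}$ lives precisely there. Announcing a Lagrange-multiplier computation and asserting that ``the optimum equals $\frac{6n^2}{2n+3}\|H\|^2$'' is not a proof. Moreover, the Lagrange-multiplier route needs more care than you indicate: you are minimizing a quadratic form with indefinite-looking terms ($+2\,g(h_{11},h_{22})-2\|h_{12}\|^2$) over a linear space subject to a linear constraint, so stationarity alone proves nothing — a priori the infimum could fail to be attained or even be $-\infty$ unless you show that the quadratic form $Q(h)-\frac{6n^2}{2n+3}\|H(h)\|^2$ is positive semidefinite on the space of totally symmetric $3$-tensors. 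Concretely, one must decompose the components into the groups $h^k_{11},h^k_{22},h^k_{12}$ with $k\leq 2$ and $k\geq 3$, together with $h^k_{ij}$ for $i,j\geq 3$, exploit the total symmetry (e.g.\ $h^k_{11}=h^1_{1k}$, $h^k_{12}=h^1_{2k}=h^2_{1k}$) to see how the mean-curvature components $\sum_i h^k_{ii}$ interact with the terms you are subtracting, and establish the sharp weighted Cauchy--Schwarz (equivalently, eigenvalue) estimate that yields $\frac{6n^2}{2n+3}$; the equality case must then be identified from this analysis rather than ``read off''. Until that block is carried out, what you have is a correct reformulation of the statement, not a proof of it.
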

This is thus an inequality pertaining intrinsic invariant of the submanifold, the $\delta(2)$ curvature invariant of the Riemannian manifold $M^n$, while also containing extrinsic invariants, thus also depending on the specific immersion of $M^n$ in the ambient space $\tilde{M}^n(4c)$. Note that an immediate consequence of the previous theorem is that a necessary condition for $M^n$ to allow a minimal Lagrangian immersion into $\mathbb{C}^n$ is that $\delta(2)$ has to be non-positive at every point of $M^n$. If the previous inequality (\ref{eq:ineqdelta}) becomes an equality for every $p\in M^n$, then $M^n$ is called a $\delta(2)$-ideal Lagrangian submanifold of $\tilde{M}^n(4c)$.


At every point $p$ of a Lagrangian submanifold $M^n$ we define the kernel of the second fundamental form $h$ by
\begin{align}\label{eq:kern}
\mathcal{D}(p)=\left\{X\in T_p M\;\vert\;\forall Y\in T_p M:h(X,Y)=0 \right\}.
\end{align}
It is then possible to define a local frame field on $M^n$ as stated by the following lemma in \cite{Delta2}. 
\begin{lemma}\label{lem:2ndfund}{\cite{Delta2}}
Let $M^n$ be a minimal Lagrangian $\delta(2)$-ideal submanifold of a complex space form $\tilde{M^n}(4c)$ with complex structure $J$. Assume that the dimension of the distribution $\mathcal{D}$ is constantly equal to $n-2$ and let $p\in M^n$. Then, there exists local orthonormal vector fields $E_1, \ldots, E_n$ on a neighbourhood of $p$ such that for all $i\geq1$ and $j\geq3$
\begin{alignat*}{3}
&h(E_1, E_1)&&=\lambda JE_1, \qquad{} &&h(E_1, E_2)=-\lambda JE_2,\\
&h(E_2, E_2)&&=-\lambda JE_1,\qquad{} &&h(E_i, E_j)=0,
\end{alignat*}
where $\lambda$ is a positive function determined by $4\lambda^2=n(n-1)c-\tau$ and $\tau$ is the scalar curvature of $M^n$.
\end{lemma}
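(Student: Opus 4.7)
The plan is to first reduce the problem to the three nontrivial components of $h$ on $\mathcal{D}^{\perp}$, determine these using the symmetry of the Lagrangian cubic form together with minimality, rotate the frame within $\mathcal{D}^{\perp}$ to bring them into the stated normal form, and finally read off $\lambda$ from the Gauss equation. Since $\mathcal{D}$ has constant rank $n-2$, its orthogonal complement $\mathcal{D}^{\perp}$ is a smooth rank-$2$ distribution on a neighbourhood of $p$; pick any local orthonormal frames $F_1, F_2$ of $\mathcal{D}^{\perp}$ and $E_3,\ldots,E_n$ of $\mathcal{D}$. By the defining property \eqref{eq:kern} of $\mathcal{D}$, we immediately have $h(E_i,E_j)=0$ whenever $i\ge 3$ or $j\ge 3$, so only the three components $h(F_1,F_1)$, $h(F_1,F_2)$, $h(F_2,F_2)$ remain to be described.

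For these I would invoke the standard fact that, for any Lagrangian submanifold of a K\"ahler manifold, the cubic form $C(X,Y,Z)=g(h(X,Y),JZ)$ is totally symmetric in its three arguments. Combined with $h(F_a,E_j)=0$ for $j\ge 3$, this forces $g(h(F_a,F_b),JE_j)=g(h(F_a,E_j),JF_b)=0$, so each $h(F_a,F_b)$ lies in $J\mathcal{D}^{\perp}$. Writing $h(F_1,F_1)=\alpha JF_1+\beta JF_2$, minimality yields $h(F_2,F_2)=-\alpha JF_1-\beta JF_2$, and total symmetry then dictates $h(F_1,F_2)=\beta JF_1-\alpha JF_2$. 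Next, I rotate $(F_1,F_2)$ within $\mathcal{D}^{\perp}$ to annihilate $\beta$: in the complex coordinate $z=v_1+iv_2$ on $\mathcal{D}^{\perp}$, the cubic form reads $C(v,v,v)=\re\bigl((\alpha-i\beta)z^3\bigr)$, so a rotation of the frame by an angle $\theta$ replaces $\alpha-i\beta$ by $(\alpha-i\beta)e^{3i\theta}$. Because $\dim\mathcal{D}=n-2$ exactly, $(\alpha,\beta)\neq(0,0)$ at every point, so $\theta=-\tfrac{1}{3}\arg(\alpha-i\beta)$ defines a smooth local rotation producing the claimed frame, with $\lambda=\sqrt{\alpha^2+\beta^2}>0$.

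To identify $\lambda$ in terms of $\tau$, I would substitute the obtained components of $h$ into the Lagrangian Gauss equation $K(X,Y)=c+g(h(X,X),h(Y,Y))-\|h(X,Y)\|^2$. A direct computation gives $K(E_1,E_2)=c-2\lambda^2$ while $K(E_i,E_j)=c$ for every other pair, and summation over $i<j$ recovers the stated relation between $\lambda^2$, $c$ and $\tau$. The only delicate point in the argument is smoothness of the rotation, which is secured precisely by the rank hypothesis on $\mathcal{D}$ that rules out the degenerate locus $(\alpha,\beta)=(0,0)$; without it, $\theta$ would fail to be well defined and the local frame could fail to exist.
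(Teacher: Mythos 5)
Your argument is sound and is essentially the standard proof; note that the paper itself does not prove this lemma but imports it from \cite{Delta2}, and the route you take is the expected one: total symmetry of the Lagrangian cubic form $(X,Y,Z)\mapsto g(h(X,Y),JZ)$ to force $h(\mathcal{D}^\perp,\mathcal{D}^\perp)\subset J\mathcal{D}^\perp$ and $h(\cdot,\mathcal{D})=0$, minimality to make the restriction to $\mathcal{D}^\perp$ trace-free, identification of such a cubic with $\re\bigl((\alpha-i\beta)z^3\bigr)$, and a rotation by one third of the argument, which is smooth and well defined precisely because the constant-rank hypothesis keeps $(\alpha,\beta)$ away from $(0,0)$. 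Your computation of the sectional curvatures from the normal form is also correct, and it is a fair (implicit) observation that, once $\dim\mathcal{D}=n-2$ is assumed, the $\delta(2)$-ideal hypothesis is not actually needed for the algebraic normal form.

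The one point to fix is the final normalization. With the convention used in the paper's preliminaries, $\tau=\sum_{i<j}K(e_i\wedge e_j)$, your curvature values $K(E_1,E_2)=c-2\lambda^2$ and $K(E_i,E_j)=c$ for all other pairs give $\tau=\tfrac{n(n-1)}{2}c-2\lambda^2$, i.e. $4\lambda^2=n(n-1)c-2\tau$, not the stated $4\lambda^2=n(n-1)c-\tau$. The stated identity corresponds to the other common convention $\tau=\sum_{i\neq j}K(e_i,e_j)$, twice the quantity defined in the preliminaries (presumably the normalization of \cite{Delta2}). So the summation over $i<j$ does not literally ``recover the stated relation''; you should either say which convention for $\tau$ you are using or record the factor $2$. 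This is a bookkeeping discrepancy (arguably already present between the lemma as quoted and the paper's own definition of $\tau$), not a gap in your argument.
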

\begin{remark}
Note that, if the dimension of the distribution $\mathcal{D}$ constant, then it is either everywhere equal to $n$, making $M^n$ totally geodesic, or it is everywhere equal to $n-2$. As it is always possible to define this frame $\left\{E_1, \ldots, E_n\right\}$ in a point $p$, whenever $M^n$ is not totally geodesic, the assumption of constant dimension of $\mathcal{D}$ in the above is just to assure the smoothness of the frame.
\end{remark}
Throughout this paper we will always assume that the dimension of $\mathcal{D}$ is always equal to $n-2$.  The vector fields $E_1$ and $E_2$ also have a geometrical interpretation, since at every point $p$ the sectional curvature of the submanifold attains a minimum on the plane spanned by $E_1(p)$ and $E_2(p)$.
 
Let $\gamma_{ij}^k=g(\nabla_{E_i}E_j, E_k)$ be the connection coefficients related to the local frame $\left\{E_1,\ldots, E_n \right\}$ as defined by the previous lemma. It is straightforward to check that $\gamma_{ij}^k=-\gamma_{ik}^j$. The following lemma then yields other relations for the connection coefficients. 
\begin{lemma}\label{lem:connect}{\cite{Delta2}}
Under the assumptions of Lemma \ref{lem:2ndfund}, the local functions $\gamma_{ij}^k=g(\nabla_{E_i}E_j, E_k)$ satisfy the following relations for $i,j>2$:
\begin{align}
&\gamma_{11}^{i}-\gamma_{22}^{i}=0,\\
&\gamma_{12}^{i}+\gamma_{21}^{i}=0,\\
&\gamma_{ij}^{1}=\gamma_{ij}^{2}=0,\\
&\gamma_{i1}^{2}=-\frac{1}{3}\gamma_{12}^{i}.
\end{align}
Moreover, the function $\lambda$ satisfies the following system of differential equations for $i>2$:
\begin{align*}
&E_1(\lambda)=-3\lambda\gamma_{21}^2,\\
&E_2(\lambda)=3\lambda\gamma_{11}^2,\\
&E_i(\lambda)=-\lambda\gamma_{1i}^1.
\end{align*}
\end{lemma}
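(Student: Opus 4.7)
My plan is to extract all the stated identities from the Codazzi equation \eqref{Codazzi}. For a Lagrangian submanifold of a complex space form $\tilde{M}^n(4c)$, the restriction $\tilde{R}(X,Y)Z=c(g(Y,Z)X-g(X,Z)Y)$ holds on tangent vectors, because every remaining term in the complex-space-form curvature tensor involves an inner product between a tangent and a normal vector and hence vanishes under the Lagrangian condition. Consequently $(\tilde{R}(X,Y)Z)^{\perp}=0$, and \eqref{Codazzi} degenerates to $(\overline{\nabla}h)(X,Y,Z)=(\overline{\nabla}h)(Y,X,Z)$. Combined with the obvious symmetry of $\overline{\nabla}h$ in its last two arguments, this makes $\overline{\nabla}h$ totally symmetric in all three entries, and this symmetry is the only tool I will use.

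The computational strategy is to evaluate $(\overline{\nabla}h)(E_a,E_b,E_c)=\nabla^{\perp}_{E_a}h(E_b,E_c)-h(\nabla_{E_a}E_b,E_c)-h(E_b,\nabla_{E_a}E_c)$ on a few well-chosen triples using Lemma \ref{lem:2ndfund}, the Lagrangian identity $\nabla^{\perp}_X JY=J\nabla_X Y$, the expansion $\nabla_{E_a}E_b=\sum_l \gamma_{ab}^{l}E_l$, and the antisymmetry $\gamma_{ab}^{c}+\gamma_{ac}^{b}=0$ (which in particular forces $\gamma_{ab}^{b}=0$). After these substitutions each side becomes a linear combination of $JE_1,\dots,JE_n$, and matching coefficients produces scalar equations in the $\gamma_{ab}^{c}$ and in the directional derivatives of $\lambda$.

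Four comparisons suffice. First, for $i,j>2$ a direct expansion gives $(\overline{\nabla}h)(E_i,E_j,E_1)=-\lambda\gamma_{ij}^{1}JE_1+\lambda\gamma_{ij}^{2}JE_2$, while by total symmetry this equals $(\overline{\nabla}h)(E_1,E_i,E_j)$, which vanishes because every $h$-term in its expansion contains a factor $h(E_k,E_l)$ with at least one index exceeding $2$; this produces $\gamma_{ij}^{1}=\gamma_{ij}^{2}=0$. Next, $(\overline{\nabla}h)(E_i,E_1,E_1)=(\overline{\nabla}h)(E_1,E_i,E_1)$ yields $E_i(\lambda)=-\lambda\gamma_{1i}^{1}$ on the $JE_1$-component and $3\gamma_{i1}^{2}=\gamma_{1i}^{2}=-\gamma_{12}^{i}$ on the $JE_2$-component, rearranging to $\gamma_{i1}^{2}=-\tfrac13\gamma_{12}^{i}$. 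Then $(\overline{\nabla}h)(E_1,E_1,E_2)=(\overline{\nabla}h)(E_2,E_1,E_1)$ produces $E_2(\lambda)=3\lambda\gamma_{11}^{2}$ and $E_1(\lambda)=-3\lambda\gamma_{21}^{2}$, together with $\gamma_{12}^{i}+\gamma_{21}^{i}=0$ for $i>2$ on the higher components. Finally $(\overline{\nabla}h)(E_i,E_2,E_2)=(\overline{\nabla}h)(E_2,E_i,E_2)$ provides a second expression $E_i(\lambda)=\lambda\gamma_{22}^{i}$, which compared with $E_i(\lambda)=\lambda\gamma_{11}^{i}$ (obtained by rewriting the conclusion of the second triple via $\gamma_{1i}^{1}=-\gamma_{11}^{i}$) yields $\gamma_{11}^{i}=\gamma_{22}^{i}$.

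The argument is conceptually short, so the main obstacle is purely bookkeeping: one must track every $JE_k$-component in each expansion (the components with $l>2$ are easy to overlook) and consistently apply $\gamma_{ab}^{c}=-\gamma_{ac}^{b}$ to rewrite indices, since several of the claimed identities only emerge after such a translation.
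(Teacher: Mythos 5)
The paper does not prove this lemma itself --- it is quoted from \cite{Delta2} --- so there is no in-text argument to compare against; judged on its own, your proof is correct. The reduction of the Codazzi equation to the total symmetry of $\overline{\nabla}h$ (valid because for tangent $X,Y,Z$ of a Lagrangian submanifold the ambient curvature $\tilde{R}(X,Y)Z=c\left(g(Y,Z)X-g(X,Z)Y\right)$ is tangent), together with $\nabla^{\perp}_X JY=J\nabla_XY$, is exactly the standard route, and your four evaluations do yield the stated identities: I checked the coefficient bookkeeping, including the $JE_l$-components with $l>2$, which give $\gamma_{12}^{i}+\gamma_{21}^{i}=0$ and are consistent with $\gamma_{ij}^{1}=\gamma_{ij}^{2}=0$, and the comparison $E_i(\lambda)=\lambda\gamma_{11}^{i}=\lambda\gamma_{22}^{i}$ giving the first relation.
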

\subsection{Quaternionic K\"{a}hler manifolds}
In this paper we will consider surfaces in the quaternionic projective space $\mathbb{H}P^n$, which is an example of a quaternionic K\"{a}hler manifold. We first introduce the notion of a quaternionic K\"ahler manifold $\tilde{M}^{4n}$ and follow the notation from \cite{RealFunabashi}.  
Let $\tilde{M}^{4n}$ be a smooth manifold of dimension $4n$ and assume that $\tilde{M}^{4n}$ satisfies the following conditions.
\begin{itemize}
\item[(a)]$\tilde{M}^{4n}$ admits a 3-dimensional vector bundle $V$ consisting of tensors of type (1,1) over $\tilde{M}^{4n}$ satisfying the condition that every point $p$ in $\tilde{M}^{4n}$ has a neighbourhood $U$ with a local basis $\left\{I, J, K \right\}$ of $V$ such that
\begin{align*}
&I^2=J^2=K^2=-\mathrm{id},\\
IJ=-JI=K,\quad &IK=-KI=-J,\quad JK=-KJ=I,
\end{align*}
where $\mathrm{id}$ is the identity tensor of type (1,1) on $\tilde{M}^{4n}$. Such a triplet $\left\{ I,J,K \right\}$ is called a local basis of $V$ on $U$. 
\item[(b)]$\tilde{M}^{4n}$ admits a Riemannian metric $g$ such that, for any canonical local basis $\left\{ I,J,K \right\}$ of $V$ in $U$, the local tensor fields $I, J, K$ are almost Hermitian with respect to $g$ and the equations
\begin{align*}
\nabla_X I&=r(X)J-q(X)K,\\
\nabla_X J&=-r(X)I+p(X)K,\\
\nabla_X K&=q(X)I-p(X)J
\end{align*}
are satisfied for any vector field $X$ in $\tilde{M}^{4n}$, with $\nabla$ denoting the Levi-Civita connection determined by $g$, where $p, q$ and $r$ are 1-forms defined on $U$. 
\end{itemize}
Such a triplet ($\tilde{M}^{4n},g,V)$ is called a quaternionic K\"{a}hler manifold with quaternionic K\"{a}hler structure $(g,V)$ and is often simply denoted by $\tilde{M}^{4n}$.  At every point $p\in \tilde{M}^{4n}$ we can define for a vector field $X$ a four-dimensional subspace $Q(X)$, called the $Q$-section, as
\begin{align*}
Q(X)=\left\{Y\;\vert\;Y=aX+bIX+cJX+dKX \right\},
\end{align*}
with $a,b,c,d\in\mathbb{R}$. If for any $Y, Z\in Q(X)$ the sectional curvature $K(Y,Z)$ is a constant $\rho(X)$, then we call it the $Q$-sectional curvature of  $\tilde{M}^{4n}$ with respect to the vector field $X$ and the point $p$. If a quaternionic K\"{a}hler manifold $\tilde{M}^{4n}$ has constant $Q$-sectional curvature $c$ at every point, then it is called a quaternionic space form and denoted by $\tilde{M}^{4n}(4c)$. The curvature operator $R$ of a quaternionic space-form $\tilde{M}^{4n}(4c)$ has the form
\begin{align}\label{def:curvature}
R(X,Y)Z=\frac{c}{4}\left[\Delta(Y,Z)X-\Delta(X,Z)Y-2\Gamma(X,Y)Z \right],
\end{align}
where 
\begin{align*}
\Delta(Y,Z)X&=g(Y,Z)X+g(IY, Z)IX+g(JY, Z)JX+g(KY,Z)KX,\\
\Gamma(X,Y)Z&=g(IX,Y)IZ+g(JX, T)JZ+g(KX,Y)KZ.
\end{align*}
The notion of almost complex and totally complex submanifolds can be found in \cite{Alekseevsky} and will be heavily used in this paper. 
\begin{definition}\label{def:almostcomplex}
A submanifold $M^{2m}$ of a quaternionic K\"ahler manifold $(\tilde{M}^{4n},g,V)$, together with a section $J_1=J_1^M\in\Gamma(V\vert_M)$ such that 
\begin{itemize}
\item[(1)]$J_1^2=-Id$,
\item[(2)]$J_1TM=TM$,
\end{itemize}
is called \textit{almost complex}. Such a submanifold is called \textit{complex} if $J_1$ is an integrable complex structure on $M$. 
\end{definition}
One can show that every 2-dimensional almost complex submanifold $M^2$ of $\tilde{M}^{4n}$ is automatically a complex submanifold. A local basis $(J_1, J_2, J_3)$ of $V$, defined on a neighbourhood $U$ in $\tilde{M}^{4n}$ of a point $p\in M^{2m}$, such that $(M^{2m}, J_1$) is an almost complex submanifold is called an adapted basis. This leads us to the definition of a totally complex submanifold. 
\begin{definition}\label{def:totalcomplex}
An almost complex submanifold $(M^{2m},J_1)$ of  $(\tilde{M}^{4n},g,V)$ with an adapted basis $(J_1, J_2,J_3)$ is called \textit{totally complex} if $J_2 TM^{2m}\perp TM^{2m}$.
\end{definition}
\begin{remark}
The definition for a totally complex submanifold does not imply that it is K\"ahler, i.e. that $\tilde{\nabla}J_1$ is zero on the submanifold, as opposed to the definition given in \cite{ComplexFunabashi}
\end{remark}
In this paper we will consider totally complex surfaces $M^2$ with an additional condition on their second fundamental form and these surfaces will be called anti-symmetric totally complex surfaces.
\begin{definition}\label{def:anti-symm}
A totally complex submanifold $(M^{2m},J_1)$ of a quaternionic K\"ahler manifold $(\tilde{M}^{4n},g,V)$ with an adapted basis $(J_1, J_2,J_3)$ and second fundamental form $h$ is called \textit{anti-symmetric} if the maps $(X,Y, Z)\mapsto g( h(X,Y),J_2 Z )$ and $(X, Y, Z)\mapsto g( h(X,Y),J_3 Z )$ are anti-symmetric in the last two indices, with $X,Y,Z$ vector fields on $M^{2m}$
\end{definition}
\subsubsection{Quaternionic projective space}\label{subs:quatproj} We now introduce the the quaternionic projective space $\mathbb{H}P^n$ as an example of a quaternionic K\"{a}hler manifold, as shown in \cite{ComplexFunabashi}. Let $S^{4n+3}$ be the unit sphere in $\mathbb{R}^{4n+4}$ and denote by $\left\{i, j, k \right\}$ the standard quaternionic structure in $\mathbb{R}^{4n+4}$. Consider the Hopf fibration $\pi:S^{4n+3}\rightarrow \mathbb{H}P^n$ over the quaternionic projective space. This is a Riemannian submersion and gives rise to a quaternionic K\"{a}hler structure of $\mathbb{H}P^n$, with a local canonical basis $\left\{I,J,K \right\}$ such that for every vector field $X$ on $\mathbb{H}P^{n}$ one has
\begin{align}
IX=(\de\pi) (\nabla_{\bar{X}}iN),\label{eq:IJK}\;\;
JX=(\de\pi) (\nabla_{\bar{X}}jN),\;\;
KX=(\de\pi) (\nabla_{\bar{X}}kN).
\end{align} 
Here $N$ is the vector field such that $N_p$  is the outer normal vector of $S^{4n+3}$ at each point $p\in S^{4n+3}$ and $\nabla$ is the Levi-Civita connection of $S^{4n+3}$. The usual notation of $\bar{X}$ denoting the unique horizontal lift of $X$ in $S^{4n+3}$ is also used. Note that this construction only gives rise to a local canonical basis and not to a global basis of the quaternionic structure. It is known that $\mathbb{H}P^n$ is a quaternionic space form with constant $Q$-sectional curvature 4 \cite{RealFunabashi}.
\section{Proof of Main Theorem 1}\label{section:MinimalQuaternionic}
In this section we will give a proof of \ref{MT1}, by considering a specific map from a minimal $\delta(2)$-ideal Lagrangian submanifold of the complex space form $\mathbb{C}^n$ to the quaternionic projective space $\mathbb{H}P^{n-1}$, using the local frame introduced in Lemma \ref{lem:connect}. We will show that the image of this map is either a point or a minimal totally complex surface. This map will also be used in Section \ref{section:Dim3}, where the we will focus on the case where the dimension $n$ equals 3 to prove \ref{MT2A} and \ref{MT2B}.


Consider a minimal $\delta(2)$-ideal Lagrangian submanifold $M^n\subset \mathbb{C}^n$ and denote by $j$ the usual complex structure on $\mathbb{C}^n$. Recall that we will always assume that the distribution $\mathcal{D}$, as defined in equation (\ref{eq:kern}), is constantly equal to $n-2$. Then Lemma \ref{lem:2ndfund} implies the existence of the local frame $\left\{E_1,\ldots, E_n \right\}$ on $M^n$. We define the following map $x$ from $M^n$ to the unit sphere $S^{4n-1}$,
\begin{align*}
x:M^n\rightarrow S^{4n-1}\subseteq \mathbb{C}^{2n}:p\mapsto \frac{1}{\sqrt{2}}(E_1(p)+iE_2(p)),
\end{align*}
with $i$ the usual complex structure on $\mathbb{C}^{2n}$. By extending $j$ to $\mathbb{C}^{2n}$ by defining $ji=-ij$, we obtain a quaternionic structure $\left\{i,j,k=ij \right\}$ on $\mathbb{C}^{2n}\cong \mathbb{R}^{4n}$. It is then possible, using the Hopf fibration $\pi:S^{4n-1}\rightarrow \mathbb{H}P^{n-1}$, to construct a map
\begin{align*}
\psi=\pi\circ x:M^n\rightarrow \mathbb{H}P^{n-1}:p\mapsto\left[E_1(p)+iE_2(p) \right]
\end{align*} from $M^n$ to the quaternionic projective space $\mathbb{H}P^{n-1}$. We define two distributions $\mathcal{D}_1$ and $\mathcal{D}_2$ on $M^n$ as 
\begin{align}
\mathcal{D}_1&=\spanned\left\{E_1, E_2 \right\},\\
\mathcal{D}_2&=\spanned\left\{E_3,\ldots, E_n \right\},\label{eq:D2}
\end{align}
where $\mathcal{D}_1$ is once again spanned by the vector fields $E_1$ and $E_2$ as in equation (\ref{eq:IntroD1}).

We now present the proof of \ref{MT1}.
\begin{proof}
We first prove that $(\de\psi)(E_k)=0$ for the vector fields $E_k$ in the distribution $\mathcal{D}_2$. Let $D$ denote the Levi-Civita connection on $\mathbb{C}^{2n}$. Since
\begin{align*}
D_{E_k}\frac{1}{\sqrt{2}}(E_1+iE_2)&=\frac{1}{\sqrt{2}}(D_{E_k}E_1+iD_{E_k}E_2)\\
&=\frac{1}{\sqrt{2}}(\nabla_{E_k}E_1+h(E_k, E_1)+i\nabla_{E_k}E_2+ih(E_k, E_2))\\
&=-\frac{1}{3\sqrt{2}}\gamma_{12}^k E_2+\frac{1}{3}\gamma_{12}^k iE_1\\
&=\frac{\gamma_{12}^k}{3\sqrt{2}}i(E_1+iE_2),
\end{align*}
we find indeed that $(\de\pi)(D_{E_k}\frac{1}{\sqrt{2}}(E_1+iE_2))=0$, which implies that $(\de\psi)(E_k)=0$ for $k\in\left\{3, \ldots, n\right\}$. Moreover, 
\begin{align*}
D_{E_1}\frac{1}{\sqrt{2}}(E_1+iE_2)&=\frac{1}{\sqrt{2}}(\nabla_{E_1}E_1+h(E_1, E_1)+i\nabla_{E_1}E_2+ih(E_1, E_2))\\
&=\frac{1}{\sqrt{2}}\left(\sum_k (\gamma_{11}^k E_k+\gamma_{12}^k iE_k)+\lambda j(E_1+iE_2)\right)\\
&=\frac{1}{\sqrt{2}}\left(\sum_{k>2} (\gamma_{11}^k E_k+\gamma_{12}^k iE_k)+\gamma_{12}^1 i(E_1+iE_2)+\lambda j(E_1+iE_2)\right)
\end{align*}
and
\begin{align*}
D_{E_2}\frac{1}{\sqrt{2}}(E_1+iE_2)=\frac{1}{\sqrt{2}}\left(\sum_{k>2} (\gamma_{21}^k E_k+\gamma_{22}^k iE_k)+\gamma_{22}^1 i(E_1+iE_2)-\lambda ij(E_1+iE_2)\right).
\end{align*}
We define the following horizontal vector fields with respect to the Hopf fibration $\pi$: 
\begin{align}
\chi_1&=\hor D_{E_1}\frac{1}{\sqrt{2}}(E_1+iE_2)=\frac{1}{\sqrt{2}}\sum_{k>2} (\gamma_{11}^k+i\gamma_{12}^k )E_k\label{eq:hor1},\\
\chi_2&=\hor D_{E_2}\frac{1}{\sqrt{2}}(E_1+iE_2)=\frac{1}{\sqrt{2}}\sum_{k>2} (\gamma_{21}^k +i\gamma_{22}^k )E_k\label{eq:hor2}.
\end{align}
Suppose now that the distribution $\mathcal{D}_1$, defined in equation (\ref{eq:IntroD1}), is totally geodesic. This means that the connection coefficients $\gamma_{ij}^k$ satisfy
\begin{align*}
\gamma_{11}^{i}=\gamma_{22}^{i}=&\gamma_{12}^{i}=\gamma_{21}^{i}=0,\quad i>2.
\end{align*}
Then the horizontal vector fields $\chi_1$ and $\chi_2$ become zero, reducing $\psi$ to a constant map in $\mathbb{H}P^{n-1}$. 

If $\mathcal{D}_1$ is not totally geodesic, then $\psi(M^n)$ becomes a surface in $\mathbb{H}P^{n-1}$ with tangent vector fields
\begin{align*}
\psi_1&=(\de\pi)(\chi_1),\\
\psi_2&=(\de\pi)(\chi_2).
\end{align*}
We will now prove that this surface is totally complex. Recall that the quaternionic projective space $\mathbb{H}P^{n-1}$ has a local quaternionic structure $\left\{I,J,K \right\}$ satisfying equations (\ref{eq:IJK}).
 Using lemma (\ref{lem:connect}) we immediately find that $\chi_2=i\chi_1$. Applying the Hopf fibration then yields that $\psi_2=I\psi_1$, which shows that $\psi(M^n)$ is an almost complex surface according to Definition \ref{def:almostcomplex}. 

To show that the surface is totally complex, one can restrict $\left\{I,J,K \right\}$ to the tangent space of the almost complex surface $\psi(M^n)$ to obtain an adapted basis. Since $j\chi_1$ and $k\chi_2$ are orthogonal to both $\chi_1$ and $\chi_2$, one also has that $J(T\psi)$ is orthogonal to $T\psi$. Thus, using Definition \ref{def:totalcomplex}, one has that $\psi(M^n)$ is a totally complex surface.

The last thing to prove is that the surface $\psi(M^n)$ is minimal, meaning that the mean curvature $H$ is identically zero. Recall that Definition \ref{def:almostcomplex} of an almost complex surface did not include the K\"ahler condition, which makes minimality a non-trivial property of the surface. As the map $\psi$ is the composition of the Hopf fibration $\pi$ with the map $x:M^n\rightarrow S^{4n-1}$, one can considering the vector field $\mathcal{\xi}$ on $S^{4n-1}$, defined as
\begin{align*}
\xi=\hor(\hor D_{E_1}\chi_1+\hor D_{E_2}\chi_2),
\end{align*}
where $\hor$ denotes the horizontal part of the vector field with respect to the Hopf fibration. We will now show that this vector $\xi$ has no parts that are mutually orthogonal to the horizontal vector fields $\chi_1$ and $\chi_2$. This will then imply that that the mean curvature vector field $H$ of the surface $\psi(M^n)\subset\mathbb{H}P^{n-1}$, which is exactly $\dd \pi(\xi)$, will have no mutually orthogonal parts to the tangent vector fields $\psi_1=\dd \pi(\chi_1)$ and $\psi_1=\dd \pi(\chi_1)$, thus making it zero everywhere on $\psi(M^n)$. 
Remark that $\chi_1$ and $\chi_2$ can be written as
\begin{align*}
\chi_1&=(\nabla_{E_1}E_1)_{2}+ i(\nabla_{E_1}E_2)_{2},\\
\chi_2&=(\nabla_{E_2}E_1)_{2}+ i(\nabla_{E_2}E_2)_{2},
\end{align*}
where the subscript denotes that we only consider the components of the expression lying in $\mathcal{D}_2$, defined in equation (\ref{eq:D2}). One can show that $~{(\nabla_{E_1}E_1)_{2}=(\nabla_{E_2}E_2)_{2}}$ and $~{(\nabla_{E_1}E_2)_{2}=-(\nabla_{E_2}E_1)_{2}}$, by applying \ref{lem:connect}.
We can thus rewrite the vector field $\xi$ as
\begin{align*}
\xi=\hor(\nabla_{E_1}(\nabla_{E_2}E_2)_2-i\nabla_{E_1}(\nabla_{E_2}E_1)_2-\nabla_{E_2}(\nabla_{E_1}E_2)_2+i\nabla_{E_2}(\nabla_{E_1}E_1)_2)
\end{align*}

Using the definition of the Riemannian curvature tensor it is possible to rewrite the previous expression as
\begin{align}\label{eq:mincurv}
\xi=\hor(R(E_1,E_2)E_2+\nabla_{[E_1,E_2]}E_2+iR(E_2, E_1)E_1+i\nabla_{[E_2,E_1]}E_1).
\end{align}
We first consider the term $\nabla_{[E_1,E_2]}E_2+i\nabla_{[E_2,E_1]}E_1$, writing
\begin{align*}
\nabla_{[E_1,E_2]}E_2+i\nabla_{[E_2,E_1]}E_1&=D_{[E_1,E_2]}E_2-h([E_1,E_2],E_2)-iD_{[E_1,E_2]}E_1+ih([E_1,E_2],E_2),\\
&=-iD_{[E_1,E_2]}(E_1+iE_2)-h([E_1,E_2],E_2)+ih([E_1,E_2],E_1).\nonumber
\end{align*}
It is possible, using analogous calculations as in the beginning of this proof, that $iD_{[E_1,E_2]}(E_1+iE_2)$ only consists of vertical components with respect to the Hopf fibration $\pi$ and of component lying in the direction of $\chi_1$ and $\chi_2$. Thus it will not contribute to the vector field $\xi$ having components that are  orthogonal to both $\chi_1$ and $\chi_2$. Applying Lemma \ref{lem:2ndfund} to the other terms of the previous equations gives the following result:
\begin{align*}
ih([E_1,E_2],E_1)-h([E_1,E_2],E_2)&=\lambda k(\gamma_{12}^1 E_1-\gamma_{21}^2 E_2-\lambda j(\gamma_{21}^2 E_1-\gamma_{12}^1E_2)\\
&=\lambda\gamma_{12}^1 k(E_1+iE_2)-\lambda \gamma_{21}^2j(E_1+iE_2),
\end{align*}
which clearly only contains vertical components with respect to $\pi$. Thus the only terms that are left in equation (\ref{eq:mincurv}) are $R(E_1,E_2)E_2$ and $iR(E_2,E_1)E_1$. Applying the Gauss equation (\ref{Gaussequation}) yields
\begin{align*}
R(E_1,E_2)E_2&=(\tilde{R}(E_1,E_2)E_2)^{T}+A_{h(E_2,E_2)}E_1-A_{h(E_1,E_2)}E_2,\\
iR(E_2,E_1)E_1&=i(\tilde{R}(E_2,E_1)E_1)^{T}+iA_{h(E_1,E_1)}E_2-iA_{h(E_2,E_1)}E_1,
\end{align*}
where $\tilde{R}^T$ denotes the tangent part of the curvature tensor with respect to the map $x:M^3\rightarrow S^{4n-1}$  of $\mathbb{H}P^{n-1}$. Through the definition of the curvature tensor of the quaternionic space $\mathbb{H}P^{n-1}$, shown in equation (\ref{def:curvature}), one can see that $(\tilde{R}(E_1,E_2)E_2)^T$ and $i(\tilde{R}(E_2,E_1)E_1)^T$ will have no horizontal components with respect to the Hopf fibration. Thus their projections will have no normal parts to the surface $\psi(M^3)\subset \mathbb{H}P^{n-1}$. Using Lemma \ref{lem:2ndfund} then shows that
\begin{align*}
R(E_1,E_2)E_2+iR(E_2,E_1)E_1&=-2\lambda^2 E_1-2\lambda^2 iE_2\\&=-\lambda^2(E_1+iE_2),
\end{align*}
which has no horizontal part. Thus we have proven that $\dd\pi(\xi)$ has no mutually orthogonal parts to the tangent vector fields $\psi_1$ and $\psi_2$, making $\psi$ a minimal totally complex surface in $\mathbb{H}P^{n-1}$.
\end{proof}
\section{Proof of Main Theorem 2A and Main Theorem 2B}\label{section:Dim3}
In this section we give proofs of \ref{MT2A} and \ref{MT2B}. We will consider 3-dimensional minimal Lagrangian submanifolds $M^3\subset \mathbb{C}^3$, with $j$ the canonical structure on $\mathbb{C}^3$.  Lemma \ref{lem:2ndfund} and Lemma \ref{lem:connect} then imply that there is a local frame $\left\{E_1, E_2, E_3 \right\}$ satisfying the following conditions for $i=1,2,3$:
\begin{alignat*}{3}
&h(E_1, E_1)&&=\lambda jE_1, \qquad{} &&h(E_1, E_2)=-\lambda jE_2,\\
&h(E_2, E_2)&&=-\lambda jE_1,\qquad{} &&h(E_i, E_3)=0,
\end{alignat*}
with connection coefficients defined by
\begin{alignat*}{3}
\nabla_{E_1}E_1&=\gamma_{11}^2E_2+\gamma_{11}^3E_3, \qquad{}&\nabla_{E_1}E_2&=-\gamma_{11}^2E_1-\gamma_{21}^3E_3,\qquad{}&\nabla_{E_1}E_3&=-\gamma_{11}^3E_1+\gamma_{21}^3E_2,\\
\nabla_{E_2}E_1&=\gamma_{21}^2E_2+\gamma_{21}^3E_3, &\nabla_{E_2}E_2&=-\gamma_{21}^2E_1+\gamma_{11}^3E_3,&\nabla_{E_2}E_3&=-\gamma_{21}^3E_1+\gamma_{11}^3E_2,\\
\nabla_{E_3}E_1&=\frac{\gamma_{21}^3}{3} E_2, &\nabla_{E_3}E_2&=-\frac{\gamma_{21}^3}{3}E_1,&\nabla_{E_3}E_3&=0.
\end{alignat*}
The function $\lambda$ also satisfies the following system of differential equations: 
\begin{align}
E_1(\lambda)&=-3\gamma_{21}^2\lambda,\label{eq:n3L1}\\
E_2(\lambda)&=3\gamma_{11}^2\lambda,\label{eq:n3L2}\\
E_3(\lambda)&=\gamma_{11}^3\lambda.\label{eq:n3L3}
\end{align}
\subsection{Proof of Main Theorem 2A}
We are now able to present the proof of the following proposition, which is the first part of \ref{MT2A}.
\begin{proposition}\label{prop:P1T2A}
Let $M^3\subset \mathbb{C}^3$ be a minimal $\delta(2)$-ideal Lagrangian submanifold, with an integrable but not totally geodesic distribution $\mathcal{D}_1$ as in (\ref{eq:IntroD1}). The image of $M^3$ under the map $\psi$, defined in \ref{MT1}, is a minimal totally complex surface in $\mathbb{H}P^2$, which is moreover anti-symmetric.
\end{proposition}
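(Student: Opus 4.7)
The starting point is that \ref{MT1} already establishes that $\psi(M^3)$ is a minimal totally complex surface in $\mathbb{H}P^2$ whenever $\mathcal{D}_1$ is not totally geodesic. Only the extra anti-symmetry of the second fundamental form $h^\psi$ with respect to the adapted basis $\{I,J,K\}$ of the quaternionic structure on $\mathbb{H}P^2$ needs to be verified.

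First, I would use the integrability hypothesis. From the explicit connection coefficients listed at the start of Section \ref{section:Dim3}, $[E_1,E_2]=\nabla_{E_1}E_2-\nabla_{E_2}E_1$ has $E_3$-component equal to $-2\gamma_{21}^3$, so integrability of $\mathcal{D}_1$ forces $\gamma_{21}^3=0$. Combined with $\gamma_{12}^i+\gamma_{21}^i=0$ from Lemma \ref{lem:connect}, this also gives $\gamma_{12}^3=0$. Substituting into (\ref{eq:hor1})--(\ref{eq:hor2}) with $n=3$, the horizontal lifts collapse to $\chi_1=\tfrac{1}{\sqrt{2}}\gamma_{11}^3\,E_3$ and $\chi_2=i\chi_1$, so the tangent frame on $\psi(M^3)$ reduces to $\{\psi_1,\psi_2=I\psi_1\}$, and the only relevant direction on $M^3$ is $E_3$.

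Second, to access $h^\psi$ I would compute the ambient derivatives $D_{\chi_i}\chi_j$ in $\mathbb{C}^6$, discard the $S^{11}$-radial component to obtain the sphere-level covariant derivative, and then split this into its Hopf-vertical and horizontal parts; the horizontal part projects under $d\pi$ to $\nabla^{\mathbb{H}P^2}_{\psi_i}\psi_j=\nabla^\psi_{\psi_i}\psi_j+h^\psi(\psi_i,\psi_j)$. Using Lemma \ref{lem:2ndfund}, the specialized connection formulas of Section \ref{section:Dim3}, and the differential equations (\ref{eq:n3L1})--(\ref{eq:n3L3}) for $\lambda$, each $D_{\chi_i}\chi_j$ should sort itself into four types of terms: radial (discarded), Hopf-vertical (killed by $d\pi$), tangent along $\chi_1,\chi_2$ (contributing to $\nabla^\psi$), and normal-horizontal (contributing to $h^\psi$).

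Third, using the symmetry of $h^\psi$ together with the minimality relation $h^\psi(\psi_1,\psi_1)+h^\psi(\psi_2,\psi_2)=0$, the anti-symmetry condition with respect to $J$ reduces to the vanishing of $g(h^\psi(\psi_1,\psi_1),J\psi_1)$ and the cross-term identity $g(h^\psi(\psi_1,\psi_1),J\psi_2)+g(h^\psi(\psi_1,\psi_2),J\psi_1)=0$, with analogous statements for $K$. At the sphere level these translate into inner products between the normal-horizontal piece of $\bar\nabla_{\chi_i}\chi_j$ and the vectors $j\chi_k,k\chi_k$, which should reduce to algebraic consequences of the relations among the $\gamma_{ij}^k$ furnished by Lemma \ref{lem:connect}. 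The main obstacle will be bookkeeping rather than conceptual difficulty: one must carefully track the horizontal/vertical decomposition for the Hopf fibration, and correctly identify which horizontal normal directions correspond to $J\psi_k$ and $K\psi_k$ as opposed to the remaining four normal directions in $T^\perp\psi(M^3)$. The collapse $\chi_1=\tfrac{1}{\sqrt{2}}\gamma_{11}^3E_3$ coming from integrability keeps the expressions manageable, and the anti-symmetry should fall out once the terms are organized.
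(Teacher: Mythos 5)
Your reduction of the problem is sound and matches the paper's strategy up to a point: \ref{MT1} gives minimality and total complexity, integrability gives $\gamma_{21}^3=\gamma_{12}^3=0$, hence $\chi_1=\tfrac{1}{\sqrt2}\gamma_{11}^3E_3$, $\chi_2=i\chi_1$, and only the anti-symmetry of $h^\psi$ remains. The gap is in your second step, the computation of $h^\psi$ itself. The expression $D_{\chi_i}\chi_j$ is not defined from the data you have: $\chi_1,\chi_2$ are vector fields along the map $x$, and they are \emph{not} tangent to the image $x(M^3)$ (the tangent directions are $x_\ast E_1,x_\ast E_2$, which differ from $\chi_1,\chi_2$ by nonzero vertical components, e.g.\ the $\lambda j(E_1+iE_2)$ and $\gamma_{12}^1 i(E_1+iE_2)$ terms computed in the proof of \ref{MT1}; moreover $x_\ast E_3=0$ here). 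So either you must extend $\chi_j$ off the image before differentiating --- and the answer depends on the extension --- or you differentiate along the accessible directions $x_\ast E_1,x_\ast E_2$ and then your procedure of ``discard radial, split into vertical and horizontal'' computes the wrong tensor: it omits the O'Neill-type correction coming from differentiating the horizontal lift of $\psi_j$ in the vertical direction contained in $x_\ast E_i$. These corrections are of the form $\lambda\,j\chi_j$ and $\lambda\,k\chi_j$, i.e.\ they lie exactly in $J\,T\psi(M^3)$ and $K\,T\psi(M^3)$, the directions the anti-symmetry condition tests. Concretely, a direct check shows the naive derivative $D_{U}\chi_v$ has \emph{no} $jE_3$- or $kE_3$-component at all (since $h(\cdot,E_3)=0$ and $\nabla_U E_3\in\mathcal{D}_1$), so your route would find the $J_2,J_3$-components of ``$h^\psi$'' identically zero and declare anti-symmetry vacuously; but the true second fundamental form has a nonzero such component (the paper's $\kappa(\partial_u,\partial_u)=\lambda\alpha^2\gamma_{11}^3\,kE_3$), and anti-symmetry is a statement about the structure of these nonvanishing terms, not about their absence.

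This is precisely the point the paper flags (``$x_u$ and $x_v$ are not the horizontal lifts of $\psi_u$ and $\psi_v$'') and resolves by introducing the fiberwise-extended map $\tilde{x}:S^3\times M^3\to S^{11}$, the fiber fields $X_1,X_2,X_3$, and the adjusted differentiation directions in \eqref{eq:H}, after which the $\sigma$/$\kappa$ decomposition and the relations \eqref{eq:sigmakappa} yield anti-symmetry. Your proposal has no counterpart to this step; the sentence about ``carefully tracking the horizontal/vertical decomposition'' addresses the decomposition of the \emph{result}, not the correction of the \emph{direction} of differentiation, which is where the essential content lies. To repair the argument you would need to either reproduce the $\tilde{x}$ construction (or an equivalent connection-form/gauge correction on the principal $Sp(1)$-bundle) so that the derivative is taken along genuinely horizontal directions, and only then verify the anti-symmetry identities you list (which, modulo listing all index combinations, are the right ones).
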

\begin{proof}
Let the distribution $\mathcal{D}_1$ of the minimal $\delta(2)$-ideal submanifold $M^3$, as defined in (\ref{eq:IntroD1}), be integrable, but not totally geodesic. This implies that the function $\gamma_{21}^3$, introduced in the beginning of this section, is everywhere zero. \ref{MT1} then implies that the image of $M^3$ under the map $\psi$ is a minimal totally complex surface in $\mathbb{H}P^2$. The only thing left to prove is then that this surface is also anti-symmetric. 

We begin the proof by defining local coordinates $(u,v,t)$ on the the minimal $\delta(2)$-ideal submanifold $M^3$, in a similar way as in \cite{Affinesfeer}. Consider a function $\alpha$ on $M^3$ defined by
\begin{align*}
\alpha=\sqrt[3]{\frac{1}{\lambda (\gamma_{11}^3)^2}}.
\end{align*}
Note that $\alpha$ is well-defined since the distribution $\mathcal{D}_1$ is not totally geodesic, implying that the function $\gamma_{11}^3$ is nowhere zero. As $\mathbb{C}^3$ is flat, the Gauss equation (\ref{Gaussequation}) reduces to $~{ R(X,Y)Z=A_{h(Y,Z)}X - A_{h(X,Z)}Y}$.
Computing al the components of the Gauss equation with respect to the frame $\left\{E_1, E_2, E_3 \right\}$ then yields
\begin{align}
E_1(\gamma_{21}^2)-E_2(\gamma_{11}^2)&=2\lambda^2-(\gamma_{11}^2)^2-(\gamma_{21}^2)^2-(\gamma_{11}^3)^2,\label{eq:diff1}\\
E_1(\gamma_{11}^3)&=0,\\
E_2(\gamma_{11}^3)&=0,\\
E_3(\gamma_{11}^2)&=\gamma_{11}^2\gamma_{11}^3,\\
E_3(\gamma_{21}^2)&=\gamma_{21}^2\gamma_{11}^3,\\
E_3(\gamma_{11}^3)&=(\gamma_{11}^3)^2.
\end{align}
The compatibility condition for this system of equations yields the additional equation
\begin{align}
E_1(\gamma_{11}^2)+E_2(\gamma_{21}^2)=0.\label{eq:diff2}
\end{align}
It is now straightforward to check that the vector fields defined by
\begin{align}
T&=E_3,\label{coT}\\
U&=\alpha E_1+\alpha E_2,\label{coU}\\
V&=-\alpha E_1+\alpha E_2\label{coV},
\end{align}
satisfy $[T,U]=[T,V]=[U,V]=0$. Thus there exist local coordinates $(u,v,t)$ such that $~{\partial_u=U}$, $\partial_v=V$ and $\partial_t=T$. Using the differential equations (\ref{eq:diff1})-(\ref{eq:diff2})  it is then possible to find the following expressions:
\begin{align*}
\gamma_{11}^3&=-\frac{1}{t},\;
\lambda=\frac{e^F}{t},\;
\alpha=te^{-\frac{1}{3}F},
\end{align*} 
where $F$ is a function depending on the coordinates $(u,v)$ and satisfying the differential equation 
\begin{align}
\Delta F=(3-6e^{2F})e^{-\frac{2}{3}F}\label{eq:LaplaceDI}, 
\end{align} 
where $\Delta$ is the Euclidean Laplacian. Solving the differential equations (\ref{eq:n3L1})-(\ref{eq:n3L3}) for the function $\lambda$ finally yields expressions for the functions $\gamma_{11}^2$ and $\gamma_{21}^2$. Thus the function $F$ completely characterizes the submanifold $M^3$, as every introduced function that determines $M^3$ can be described using $F$. These functions and the vector fields $U, V$ and $T$, as defined in (\ref{coT})-(\ref{coV}) will be used to find a correspondence between the submanifold $M^3$ and a surface in $\mathbb{H}P^{2}$.

\ref{MT1} implies that the image of the map $~{\psi:M^3\rightarrow \mathbb{H}P^2: p\mapsto [E_1(p)+iE_2(p)]}$ is a minimal totally complex surface, as the distribution $\mathcal{D}_1$ is non-totally geodesic. Recall that $\psi$ is the composition of the Hopf fibration $\pi$ together with the map
\begin{align}
x:\mathbb{C}^3\rightarrow S^{11}:\frac{1}{\sqrt{2}}(E_1(p)+iE_2(p)).\label{mapx}
\end{align}
As the vector field $T$ is equal to the vector field $E_3$, we know that $(\de \psi)(T)=0$ from the proof of \ref{MT1}. One then only has to consider the tangent vector fields $x_u$ and $x_v$ of the map $x$ with respect to the coordinate vector fields $\partial_u=U$ and $\partial_v=V$,
\begin{align*}
x_u&=\frac{\alpha}{\sqrt{2}}\left(\gamma_{11}^3(1+i)E_3 - i(\gamma_{11}^2+\gamma_{21}^2)(E_1+iE_2)+\lambda j(E_1+iE_2)-\lambda k(E_1+iE_2)\right),\\
x_v&=\frac{\alpha}{\sqrt{2}}\left(\gamma_{11}^3(i-1)E_3 + i(\gamma_{11}^2-\gamma_{21}^2)(E_1+iE_2)-\lambda j(E_1+iE_2)-\lambda k(E_1+iE_2)\right).
\end{align*}
The horizontal part of these vector fields are then given by
\begin{align*}
\chi_u&=\frac{\alpha}{\sqrt{2}}\gamma_{11}^3(1+i)E_3,\\
\chi_v&=\frac{\alpha}{\sqrt{2}}\gamma_{11}^3(i-1)E_3.
\end{align*}
As in the proof of \ref{MT1}, we will denote by $\psi_u=(\de \pi)(\chi_u)$ and$ \psi_v=(\de \pi)(\chi_v)$ the tangent vectors to the surface $\psi(M^3)$. A calculation shows that the vector fields have length 
\begin{align*}
g(\psi_u,\psi_u)&=g(\psi_v,\psi_v)=\frac{1}{2}e^{-\frac{2}{3}F},\;\;\;g(\psi_u,\psi_v)=0,
\end{align*}
where $g$ is the induced metric tensor on $\mathbb{H}P^2$, making $(u,v)$ isothermal coordinates on the surface $\psi(M^3)$.

The calculation of the second fundamental form $h$ of the surface $\psi(M^3)$ with respect to the coordinates $(u, v)$ becomes more involved since the tangent vectors $x_u$ and $x_v$ of the map $x$ are not the horizontal lifts of $\psi_u$ and $\psi_v$ respectively. This can be solved by utilising a parametrisation of $S^3$ such that the horizontal vector fields $\chi_u$ and $ \chi_v$ can be written in terms of $x_u$ and $x_v$. More specifically, we will consider the maps $\tilde{x}:S^3\times M^3\rightarrow S^{11}:(y,p)\mapsto \frac{1}{\sqrt{2}}y(E_1(p)+iE_2(p))$ and $\tilde{\psi}$ as the composition of $\pi$ with $\tilde{x}$. One can write an arbitrary element $y\in S^3$ as $y_1+iy_2+jy_3+ky_4$, with $y_1^2+y_2^2+y_3^2+y_4^2=1$ and $y_1, y_2, y_3,y_4\in \mathbb{R}$. Consider the following vector fields on $S^3$,
\begin{align*}
X_1(y_1+iy_2+jy_3+ky_4)&=(y_1+iy_2+jy_3+ky_4)\cdot i,\\
X_2(y_1+iy_2+jy_3+ky_4)&=(y_1+iy_2+jy_3+ky_4)\cdot j,\\
X_3(y_1+iy_2+jy_3+ky_4)&=(y_1+iy_2+jy_3+ky_4)\cdot k.
\end{align*}
The tangent vector fields $\tilde{x}_u,\tilde{x}_v$ are then simply 
\begin{align*}
\tilde{x}_u&=(y_1+iy_2+jy_3+ky_4)x_u,\\
\tilde{x}_v&=(y_1+iy_2+jy_3+ky_4)x_v.
\end{align*}
In an analogous way we also obtain expressions for the horizontal vector fields $\tilde{\chi}_u$ and $\tilde{\chi}_v$. Using the vector fields $X_1, X_2, X_3$ it is possible to write 
\begin{align*}
\tilde{\chi}_u&=\tilde{x}_u+\frac{\alpha}{\sqrt{2}}\left((\gamma_{11}^2+\gamma_{21}^2)\de\tilde{x}(X_1)-\lambda(\de\tilde{x})(X_2)+\lambda\de\tilde{x}(X_3)\right),\\
\tilde{\chi}_v&=\tilde{x}_v+\frac{\alpha}{\sqrt{2}}\left((\gamma_{21}^2-\gamma_{11}^2)\de\tilde{x}(X_1)+\lambda(\de\tilde{x})(X_2)+\lambda\de\tilde{x}(X_3)\right).
\end{align*}
These expressions will now enable us to calculate the second fundamental form $h$ of the surface $\psi(M^3)$ in $\mathbb{H}P^2$. Consider the type (1, 2) tensor $\tilde{\mathcal{H}}$ associated to the map $\tilde{x}$, defined as
\begin{equation}\label{eq:H}
\begin{split}
\tilde{\mathcal{H}}(\partial_u, \partial_u)&=\nor\left(\hor\left(D_{\left(\partial_u+\frac{\alpha}{\sqrt{2}}\left((\gamma_{11}^2+\gamma_{21}^2)X_1-\lambda X_2+\lambda X_3\right)\right)}\tilde{\chi}_u\right)\right),\\
\tilde{\mathcal{H}}(\partial_u, \partial_v)&=\nor\left(\hor\left(D_{\left(\partial_u+\frac{\alpha}{\sqrt{2}}\left((\gamma_{11}^2+\gamma_{21}^2)X_1-\lambda X_2+\lambda X_3\right)\right)}\tilde{\chi}_v\right)\right),\\
\tilde{\mathcal{H}}(\partial_v, \partial_u)&=\nor\left(\hor\left(D_{\left(\partial_u+\frac{\alpha}{\sqrt{2}}\left((\gamma_{21}^2-\gamma_{11}^2)X_1+\lambda X_2+\lambda X_3\right)\right)}\tilde{\chi}_u\right)\right),\\
\tilde{\mathcal{H}}(\partial_v, \partial_v)&=\nor\left(\hor\left(D_{\left(\partial_u+\frac{\alpha}{\sqrt{2}}\left((\gamma_{21}^2-\gamma_{11}^2)X_1+\lambda X_2+\lambda X_3\right)\right)}\tilde{\chi}_v\right)\right).
\end{split}
\end{equation}
One can then straightforwardly identify $\tilde{\mathcal{H}}$ with a type (1, 2) tensor $\mathcal{H}$ associated to the map $x$.
Denote with $\sigma(X,Y)$ the part of $\mathcal{H}(X,Y)$ lying in the direction of $(E_1-E_2)$ and $i(E_1-iE_2)$ and with $\mathcal{H}$ the part lying in the direction of $jE_3$ and $kE_3$. Then we find for the tensor $\mathcal{H}$:
\begin{equation}\label{eq:hphi}
\begin{split}
\mathcal{H}(\partial_u, \partial_u)&=\sigma(\partial_u, \partial_u)+\kappa(\partial_u, \partial_u),\\
\mathcal{H}(\partial_v, \partial_v)&=\sigma(\partial_v, \partial_v)+\kappa(\partial_v, \partial_v),\\
\mathcal{H}(\partial_u, \partial_v)&=\sigma(\partial_u, \partial_v)+\kappa(\partial_u, \partial_v).
\end{split}
\end{equation}
The tensors $\sigma$ and $\kappa$ satisfy the following relations:
\begin{equation}\label{eq:sigmakappa}
\begin{split}
\sigma(\partial_v, \partial_v)&=-\sigma(\partial_u, \partial_u),\quad \kappa(\partial_v,\partial_v)=-\kappa(\partial_u, \partial_u),\\
\sigma(\partial_u,\partial_v)&=\sigma(\partial_v, \partial_u),\quad\;\;\, \kappa(\partial_u,\partial_v)=\kappa(\partial_v,\partial_u),\\
\sigma(\partial_u,\partial_v)&=i\sigma(\partial_u,\partial_u),\quad\,  \kappa(\partial_u,\partial_v)=-i \kappa(\partial_u,\partial_u),
\end{split}
\end{equation}
with explicit formulations
\begin{align*}
\sigma(\partial_u, \partial_u)&=-\frac{\alpha^2(\gamma_{11}^3)^2}{\sqrt{2}}i(E_1-iE_2),\\
\kappa(\partial_u, \partial_u)&=\lambda\alpha^2\gamma_{11}^3kE_3.
\end{align*}
Note that, due to the defining equations (\ref{eq:H}) of the tensor $\tilde{\mathcal{H}}$, one can find the second fundamental form $h$ of the surface $\psi(M^3)$ by applying the Hopf-fibration to the tensor $\mathcal{H}$. The observation that the tensors $\sigma$ and $\kappa$ have to satisfy the previous equations, together with Definition \ref{def:anti-symm}, imply that the surface $\psi(M^3)$ is a minimal anti-symmetric totally complex surface, which is what we had to prove.
\end{proof}

The following proposition shows a characterization for minimal anti-symmetric totally complex surfaces in the quaternionic projective space $\mathbb{H}P^2$, which will be used in the proof of \ref{MT2A}.
\begin{proposition}\label{prop:charN2}
Let $N^2$ be an immersed minimal anti-symmetric totally complex surface of the quaternionic projective space  $\mathbb{H}P^2$. Then the metric $g$ and second fundamental form $h$ of the immersion solely depend on a function $f: N^2\rightarrow\mathbb{R}$, which satisfies the following relation:
\begin{align}\label{eq:charf}
\Delta_g f&=6(1-2e^{2f}),
\end{align}
where $\Delta_g$ is the Laplace-Beltrami operator. 
\end{proposition}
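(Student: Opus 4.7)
The plan is to reduce the structural data of a minimal anti-symmetric totally complex surface $N^2 \subset \mathbb{H}P^2$ to a single scalar invariant, namely the conformal factor of the induced metric in an isothermal chart, and then to derive the claimed PDE from the Gauss equation. Around any point of $N^2$ I would pick isothermal coordinates $(u,v)$ so that the induced metric reads $g = e^{2f}(du^2 + dv^2)$, with $f$ the function in the statement; in dimension two the Gaussian curvature then satisfies $K_N = -\Delta_g f$. Set $e_1 = e^{-f}\partial_u$ and $e_2 = e^{-f}\partial_v$, which, after possibly reflecting $J_1$, satisfy $e_2 = J_1 e_1$. The rank-$6$ normal bundle decomposes canonically as $T^{\perp}N = J_2(TN) \oplus N_0$, where $N_0 := (TN \oplus J_2(TN))^{\perp}$ is the rank-$4$ perpendicular normal bundle, invariant under the quaternionic bundle $V$.

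Next I would show that the second fundamental form $h$ takes values in $N_0$. Setting $T_\alpha(X,Y,Z) := g(h(X,Y), J_\alpha Z)$ for $\alpha = 2, 3$, the anti-symmetric hypothesis makes $T_\alpha$ anti-symmetric in $(Y,Z)$, while the symmetry of $h$ makes it symmetric in $(X,Y)$. Evaluating $T_\alpha$ on the basis $\{e_1, e_2\}$ and combining with minimality $h(e_2, e_2) = -h(e_1, e_1)$ forces $T_\alpha \equiv 0$. A local orthonormal frame of $N_0$ is then obtained by choosing a unit section $\eta_1$ and setting $\eta_2 = J_1 \eta_1$, $\eta_3 = J_2 \eta_1$, $\eta_4 = J_3 \eta_1$; expanding $h(e_1, e_1) = \sum_i a_i \eta_i$ and $h(e_1, e_2) = \sum_i b_i \eta_i$ (with $h(e_2, e_2) = -h(e_1, e_1)$) presents $h$ in terms of eight scalar unknowns.

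I would then use the $SU(2)$-gauge freedom in the choice of $\eta_1$ (unit-quaternion rotations inside the $Q$-section of $N_0$) to normalize several of the $a_i, b_i$ to zero or to a canonical expression in $f$. The Codazzi equation $(\overline{\nabla} h)(X,Y,Z) - (\overline{\nabla} h)(Y,X,Z) = (\tilde{R}(X,Y)Z)^{\perp}$, together with the quaternionic K\"{a}hler structure equations $\tilde{\nabla}_X J_1 = r(X) J_2 - q(X) J_3$ and the cyclic analogues, produces after this normalization a coupled system forcing the remaining coefficients to be expressible purely in terms of $f$ and its first derivatives; the Ricci equation then pins down the normal connection on $N_0$, closing up the system.

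Finally, the Gauss equation of the immersion reads $K_N = \tilde{K}(e_1 \wedge e_2) - |h(e_1,e_1)|^2 - |h(e_1,e_2)|^2$. The plane $\spanned\{e_1, e_2\} = \spanned\{e_1, J_1 e_1\}$ lies inside the $Q$-section $Q(e_1)$, so $\tilde{K}(e_1 \wedge e_2)$ equals the $Q$-sectional curvature $4$ of $\mathbb{H}P^2$. Substituting the explicit expression for $|h|^2$ obtained in the previous step together with $K_N = -\Delta_g f$ yields, after a short rearrangement, the equation $\Delta_g f = 6(1 - 2 e^{2f})$. The hard part is the third paragraph: one must exploit the $SU(2)$-gauge and the full Codazzi--Ricci system of $\mathbb{H}P^2$ carefully to check that, after normalization, the only remaining scalar degree of freedom in $h$ is precisely the conformal factor $f$, so that $f$ (and not some auxiliary function) satisfies the asserted PDE.
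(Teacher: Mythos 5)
Your second step is the decisive wrong turn. Read literally, Definition \ref{def:anti-symm} does imply what you say: a cubic form that is symmetric in its first two arguments (symmetry of $h$) and anti-symmetric in its last two vanishes identically by the usual braid argument (minimality is not even needed), so you conclude that $h$ takes values in the quaternionic complement $N_0$. But this deletes exactly the datum the proposition is about. In the paper's proof the hypothesis is used to put the second fundamental form in the shape $h(E_1,E_1)=\xi+\beta JE_1$, $h(E_1,E_2)=I\xi-\beta KE_1$, $h(E_2,E_2)=-\xi-\beta JE_1$ with $\beta=g(h(E_1,E_1),JE_1)$ \emph{nowhere zero}, and it is precisely this $J_2TN$-component that becomes the function of the statement, via $\beta=\sqrt{2}\,e^{f}$; the condition actually used (and the one verified for the surfaces $\psi(M^3)$ in Proposition \ref{prop:P1T2A}) is the weaker set of relations \eqref{eq:sigmakappa}, not literal anti-symmetry of the two cubic forms. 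So with your reading you are analysing a different class of surfaces (one that is excluded, since $\beta\neq0$), and following your own plan from there cannot produce \eqref{eq:charf}.

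Two further problems. First, identifying $f$ with the conformal exponent, $g=e^{2f}(\de u^2+\de v^2)$, is incompatible with the asserted PDE: in the correct normalization the metric is $g=\tfrac12 e^{-\frac23 f}(\de u^2+\de v^2)$, the Gauss equation gives $K=4-\lVert h(E_1,E_1)\rVert^2-\lVert h(E_1,E_2)\rVert^2=2-2\beta^2=2-4e^{2f}$, and only with this relation between $f$ and the conformal factor does $K=-\Delta_g(\text{conformal exponent})$ rearrange to $\Delta_g f=6(1-2e^{2f})$; with your choice of $f$ the same computation yields an equation of the form $\Delta_g f=\tfrac14 e^{-6f}-2$, so the final ``short rearrangement'' does not close. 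Second, the part you flag as the hard part is where all the content lies, and it is not mere gauge-fixing: one needs the Codazzi equation to give $E_1(\beta)=3b\beta$, $E_2(\beta)=3a\beta$ (equations \eqref{eq:diffbeta1}--\eqref{eq:diffbeta2}, which is what makes $(2\beta)^{-1/3}E_1,(2\beta)^{-1/3}E_2$ commute and furnish the isothermal chart), and the Ricci equation to force $\alpha^2=\beta^2$, tying the $N_0$-valued part of $h$ and the normal connection to the single function $\beta$. Without carrying out this step your argument does not establish that one scalar controls both $g$ and $h$, nor identify which scalar it is; as it stands the proposal sketches the right general strategy (adapted frame, normal splitting, Gauss--Codazzi--Ricci plus gauge freedom) but, because of the vanishing of $\beta$ and the misidentification of $f$, it would not prove the stated proposition.
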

\begin{proof}
Let $N^2\subset\mathbb{H}P^2$ be an immersed minimal anti-symmetric totally complex surface with local frame $\left\{E_1,E_2\right\}$. Then there exists a local adapted basis $\left\{ I,J,K\right\}$ of the quaternionic structure of $\mathbb{H}P^2$, acting on $TN^2$, such that $E_2=IE_1$ and that $JE_1, KE_1$ are normal to the surface. Note that this fixes the section $I$ on the surface, but not the sections $J$ and $K$. Recall that being anti-symmetric implies that $g( h(X,Y),JZ)$ and $g( h(X,Y),KZ )$ are anti-symmetric in the last two indices, with $g$ the metric tensor of $\mathbb{H}P^2$,  $h$ the second fundamental form of $N^2$ and $X,Y,Z$ vector fields on $N^2$. It is then possible, since $N^2$ is also minimal,  to fix the section $J$ of the local adapted basis by writing the second fundamental form as 
\begin{align}
h(E_1,E_1)&=\xi+\beta JE_1,\label{eq: 2ndfundform1}\\
h(E_1,E_2)&= I\xi-\beta KE_1,\label{eq: 2ndfundform2}\\
h(E_2,E_2)&=- \xi-\beta JE_1,\label{eq: 2ndfundform3}
\end{align}
where $\xi$ is a unit vector field orthogonal to $E_1, IE_1, JE_1, KE_1$ and $\beta$ are smooth functions on $N^2$ and $\beta$ also a non-zero function. Denote the Levi-Civita connections of $N^2$ and $\mathbb{H}P^2$ by $\nabla$ and $\tilde{\nabla}$ respectively. We consider smooth functions $a,b$ on $N^2$ such that the connection $\nabla$ is defined as 
\begin{align*}
\nabla_{E_1}E_1&=aE_2,\quad \nabla_{E_1}E_2=-aE_1,\\
\nabla_{E_2}E_1&=-bE_2,\quad \nabla_{E_2}E_2=bE_1.
\end{align*}
Since $\left\{ I,J,K\right\}$ is a local adapted basis of the quaternionic structure of $\mathbb{H}P^2$ defined on $TN^2$, there are 1-forms $r,p$ and $q$  defined on a neighbourhood $U$ of $\mathbb{H}P^2$ around a point $p$ of $N^2$ such that for every tangent vector field $X$ on $N^2$  one has
\begin{align*}
\tilde{\nabla}_X I&=r(X)J-q(X)K,\\
\tilde{\nabla}_X J&=-r(X)I+p(X)K,\\
\tilde{\nabla}_X K&=q(X)I-p(X)J.
\end{align*} 
Using the formula of Gauss (\ref{Gaussformula}) for tangent vector fields $X$ and $Y$ on $N^2$, we can write 
\begin{align*}
(\tilde{\nabla}_X I)Y=\nabla_X(IY)+h(X,IY)-I\nabla_X Y-Ih(X,Y).
\end{align*}
One can reformulate this as $~{h(X,IY)-Ih(X,Y)=r(X)JY-q(X)KY}$, as $N^2$ is a totally complex surface with respect to $I$. This results, together with equations (\ref{eq: 2ndfundform1})-(\ref{eq: 2ndfundform3}), in the following relations:
\begin{alignat*}{3}
&r(E_1)&&=0,\quad &&r(E_2)=-2\beta,\\
&q(E_1)&&=2\beta,\quad &&q(E_1)=0. 
\end{alignat*}
Using these equalities and denoting $p(E_1)$ and $p(E_2)$ as $p_1$ and $p_2$ respectively, the connections $\tilde{\nabla}_{E_1}$ and $\tilde{\nabla}_{E_2}$ can be written as 
\begin{align*}
&\tilde{\nabla}_{E_1}E_1=aE_2+\beta JE_1+\xi,\\
&\tilde{\nabla}_{E_1}E_2=-aE_1-\beta KE_1+I\xi,\\
&\tilde{\nabla}_{E_1}JE_1=-\beta E_1+(p_1-a)KE_1+J\xi,\\
&\tilde{\nabla}_{E_1}KE_1=\beta E_2+(a-p_1)JE_1+\alpha K\xi,\\
&\tilde{\nabla}_{E_1}\xi=-E_1+c_1I\xi+c_2J\xi+c_3K\xi,\\
&\tilde{\nabla}_{E_1}I\xi=-E_2-c_1\xi-c_3J\xi+c_2\beta K\xi,\\
&\tilde{\nabla}_{E_1}J\xi=-JE_1-c_2\xi+c_3I\xi+(p_1-c_1)K\xi,\\
&\tilde{\nabla}_{E_1}K\xi=-KE_1-c_3\xi+(2\beta-c_2)I\xi+(c_1-p_1)J\xi,\\
&\tilde{\nabla}_{E_2}E_1=-bE_2-\beta KE_1+I\xi,\\
&\tilde{\nabla}_{E_2}E_2=bE_1-\beta JE_1-\xi,\\
&\tilde{\nabla}_{E_2}JE_1=\beta E_2+(b+p_2)KE_1-\xi,\\
&\tilde{\nabla}_{E_2}KE_1=\beta E_1-(b+p_2)KE_1+J\xi,\\
&\tilde{\nabla}_{E_2}\xi=E_2+d_1 I\xi+d_2 J\xi+d_3K\xi,\\
&\tilde{\nabla}_{E_2}I\xi=-E_1-d_1\xi-(d_3+2\beta)J\xi+d_2\xi,\\
&\tilde{\nabla}_{E_2}J\xi=-KE_1-d_2\xi+(d_3+2\beta)I\xi+(p_2-d_1)K\xi,\\
&\tilde{\nabla}_{E_2}K\xi=JE_1-d_3\xi-d_2 I\xi-(p_2-d_1)J\xi,
\end{align*}
where $c_1, c_2, c_3,d_1,d_2$ and $d_3$ are smooth functions defined on $N^2$.
Computing the Codazzi equation (\ref{Codazzi}), by substituting $(X,Y,Z)$ by $(E_1, E_2, E_1)$, results in the algebraic relations
\begin{alignat*}{3}
&d_2&&=-c_3,\qquad{}&&d_3=c_2-2\beta,\\
&c_1&&=2a,\qquad{}&&d_1=-2b,
\end{alignat*}
while also yielding differential equations for the function $\beta$,
\begin{align*}
E_1(\beta)&=(b-p_2)\beta,\\ 
E_2(\beta)&=(a+p_1)\beta.
\end{align*}
By substituting $(X,Y,Z)$ by $(E_1, E_2, E_1)$ in the Gauss equation (\ref{Gaussequation}) one also obtains a differential equation for the functions $a$ and $b$: 
\begin{align}
E_2(a)+E_1(b)&=2+a^2+b^2-2\beta^2.\label{eq:Gaussbeta}
\end{align}
Note that there is still a degree of freedom regarding our choice of local frame $\left\{E_1, E_2 \right\}$ as it is still possible to rotate the vector field $E_1$ in the plane spanned by $\left\{E_1, E_2 \right\}$. To simplify notation we will write $e^{i\theta}X=\cos{\theta}X+\sin(\theta)IX$, for a vector field $X$ on $N^2$. Since $E_2=IE_1$, one gets that by rotating $E_1$ over an angle $\theta$, equation (\ref{eq: 2ndfundform1}) becomes
\begin{align*}
h(e^{i\theta}E_1, e^{i\theta}E_1)=e^{2i\theta}\xi+\beta e^{-i\theta}J(e^{i\theta}E_1).
\end{align*}
Thus the vector field $\xi$ has to be rotated over an angle $2\theta$ if $E_1$ is rotated over an angle $\theta$ and $J$ has to be redefined as $e^{-i\theta}J$, or written explicitly 
\begin{align*}
E_1'=e^{i\theta}E_1,\;\;\xi'=e^{2i\theta}\xi,\;\;J'=e^{-i\theta}J.
\end{align*}
By comparing $\tilde{\nabla}_{E_1}\xi$ with $\tilde{\nabla}_{E_1'}\xi'$ one can see that by rotating over an appropriate angle $\theta$, the vector field $\tilde{\nabla}_{E_1'}\xi'$ has no part lying in the direction of $K\xi$. Thus we can, without loss of generality, take $c_3$ to be zero. We introduce the function $\alpha=c_2-\beta$ to simplify the calculations. Applying the Ricci equation (\ref{Ricci}) and substituting $(X,Y,\zeta, \eta$) by $(E_1,E_2,\xi, I\xi), (E_1,E_2,\xi, J\xi)$ and $(E_1,E_2,\xi, K\xi)$, while using that $c_3=0$, we deduce that
\begin{align}
E_2(a)+E_1(b)&=2+a^2+b^2-\beta^2-\alpha^2,\label{eq:Gaussbeta2}\\
E_2(\alpha)&=\alpha(5a-p_1),\\
E_1(\alpha)&=\alpha(5b+p_2).\label{eq:Gaussalpha}
\end{align}
The compatibility condition for this system of equations yields the additional equation $E_1(a)-E_2(b)=0$. Comparing equations (\ref{eq:Gaussbeta}) and (\ref{eq:Gaussbeta2}) then yields that $\alpha^2=\beta^2$ and without loss of generality we can take $\alpha=\beta$. This also implies that $p_1=2a$, $p_2=-2b$ and $c_2=2\beta$. We thus have have the following system of differential equations
\begin{align}
E_1(\beta)&=3b\beta,\label{eq:diffbeta1}\\
E_2(\beta)&=3a\beta,\label{eq:diffbeta2}\\
E_1(a)-E_2(b)&=0,\label{eq:diffa1b2}\\
E_2(a)+E_1(b)&=2+a^2+b^2-2\beta^2\label{eq:diffa2b1}.
\end{align}
Consider a smooth function $\rho$ on $N^2$ defined by $\rho=(2\beta)^{-1/3}$ and construct vector fields $U$ and $V$ on $N^2$ as
\begin{align*}
U&=\rho E_1,\;V=\rho E_2.
\end{align*}
One can prove using equations (\ref{eq:diffbeta1})-(\ref{eq:diffbeta2}) that $[U,V]=0$, thus we can define coordinates $(u,v)$ such that $\partial_u=U$ and $\partial_v=V$. In particular, these coordinates are isothermal coordinates with corresponding metric $g= (2\beta)^{-2/3}(\de u^2+\de v^2)$. Using the coordinate vector fields $\partial_u$ and $\partial_v$, it is possible to show that equation (\ref{eq:diffa1b2}) is trivially satisfied and that equation (\ref{eq:diffa2b1}) implies that the function  $\beta$ satisfies the following differential equation:
\begin{align*}
\Delta(\log \beta)=-\frac{3\sqrt[3]{2}(\beta^2-1)}{\beta^{2/3}},
\end{align*}
where $\Delta$ is the Euclidean Laplacian. Note that if one defines a new function $f$ on $N^2$ as $\beta=\sqrt{2}e^{f}$, the previous equation reduces to $\Delta f=(3-6e^{2f})e^{-\frac{2}{3}f}$, with the metric given by 
\begin{align*}
g= \frac{1}{2}e^{-\frac{2}{3}f}(\de u^2+\de v^2).
\end{align*}
Recall that the Laplace-Beltrami operator $\Delta_g$ of a metric $~{g=\phi(u,v)(\de u^2+\de v^2)}$ with respect to isothermal coordinates $(u,v)$ is given by 
\begin{align}\label{eq:metricLaplacian}
\Delta_g=\frac{1}{\phi(u,v)}\left(\frac{\partial^2}{\partial u^2}+\frac{\partial^2}{\partial v^2} \right).
\end{align} 
One can thus the previous differential equation as $\Delta_g f=6(1-2e^{2f})$,
which is exactly equation (\ref{eq:charf}) from the proposition. The function $f$, defined by the previous equation, then completely determines the surface $N^2$.
\end{proof}
We are now able to formulate the proof of \ref{MT2A}.
\begin{proof}
Note that the first part of the theorem has already been proven in Proposition \ref{prop:P1T2A}. Thus, all that is left to prove is the second part of the theorem. Take $N^2$ to be a minimal anti-symmetric totally complex surface in $\mathbb{H}P^2$. Proposition \ref{prop:charN2} then states that $N^2$ can be characterized by a non-zero function $f$ satisfying differential equation (\ref{eq:charf}). From the proof of Proposition \ref{prop:charN2} and the differential equation (\ref{eq:charf}) of the function $f$, we have that the metric $g$ with respect to the isothermal coordinates $(u,v)$ is given by $g= \frac{1}{2}e^{-\frac{2}{3}f}(\de u^2+\de v^2)$. Note that the differential equation (\ref{eq:charf}) is then exactly the same as equation (\ref{eq:LaplaceDI}), which characterizes a minimal $\delta(2)$-ideal Lagrangian submanifold in $\mathbb{\mathbb{C}}^3$, with distribution $\mathcal{D}_1$ being integrable but not totally geodesic. 

Let $S$ be an open domain in $\mathbb{R}^2$ and $f$ a function satisfying the differential equation (\ref{eq:LaplaceDI}) on $S$, characterizing the surface $N^2$. We can then define functions $\gamma_{11}^2, \gamma_{21}^2, \gamma_{11}^3, \lambda,\alpha$ as described in the proof of Proposition \ref{prop:P1T2A}, by identifying the function $f$ with the function $F$. It is now possible to construct vector fields $E_1,E_2$ and $E_3$ on $S\times\mathbb{R}$ by
\begin{align*}
E_1&=\frac{1}{2\alpha}\partial_u-\frac{1}{2\alpha}\partial_v,\\
E_2&=\frac{1}{2\alpha}\partial_u+\frac{1}{2\alpha}\partial_v,\\
E_3&=\partial_t,
\end{align*}
where $\partial_u, \partial_v$ are the coordinate vector fields on $S$ and $\partial_t$ is the coordinate vector field on $\mathbb{R}$. We can introduce a metric tensor $g$ on $S\times\mathbb{R}$ by imposing that the vector fields $E_1,E_2$ and$E_3$ form an orthonormal basis. By defining a symmetric $(1,2)$-tensor field $h$ on $S\times\mathbb{R}$ by
\begin{alignat*}{4}
&h(E_1, E_1)&&=\lambda jE_1, \qquad{} &&h(E_1, E_2)=-\lambda jE_2, \qquad{}&&h(E_1,E_3)=0,\\
&h(E_2, E_2)&&=-\lambda jE_1,\qquad{} &&h(E_2, E_3)=0,\qquad{} &&h(E_3,E_3)=0, 
\end{alignat*}
one can immerse $S\times\mathbb{R}$ in $\mathbb{C}^3$, with $j$ denoting the canonical complex structure on $\mathbb{C}^3$, by taking $h$ as the second fundamental form of $S\times\mathbb{R}$. It is finally a straightforward calculation to show that the distribution $\mathcal{D}_1=\spanned\left\{E_1, E_2 \right\}$ is integrable and not totally geodesic and that $S\times\mathbb{R}$ is a minimal $\delta(2)$-ideal Lagrangian immersion of $\mathbb{C}^3$.
\end{proof}
\subsection{Proof of Main Theorem 2B}
We will now consider the case in which the distribution $\mathcal{D}_1$ is nowhere integrable and present the proof of \ref{MT2B}. One can first prove the following proposition, which is an analogous version of Proposition \ref{prop:P1T2A}.
\begin{proposition}\label{prop:P1T2B}
Let $M^3\subset \mathbb{C}^3$ be a minimal $\delta(2)$-ideal Lagrangian submanifold, with nowhere integrable distribution $\mathcal{D}_1$ as in (\ref{eq:IntroD1}). The image of $M^3$ under the map $\psi$, defined in \ref{MT1}, is a minimal totally complex surface in $\mathbb{H}P^2$, which is moreover anti-symmetric.
\end{proposition}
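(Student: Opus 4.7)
The plan is to mirror the structure of the proof of Proposition \ref{prop:P1T2A}, with the key modification that we must track the effect of the nowhere vanishing coefficient $\gamma_{21}^3$, which is precisely what measures the failure of $\mathcal{D}_1$ to be integrable, since $[E_1,E_2]$ now has a non-zero $E_3$-component $-2\gamma_{21}^3$. As $\mathcal{D}_1$ being nowhere integrable in particular prevents it from being totally geodesic, \ref{MT1} already provides that $\psi(M^3)$ is a minimal totally complex surface in $\mathbb{H}P^2$; the remaining task is therefore solely to establish the anti-symmetry of its second fundamental form in the sense of Definition \ref{def:anti-symm}.

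First I would derive the full set of Gauss and Codazzi equations for the frame $\{E_1,E_2,E_3\}$ of Lemma \ref{lem:connect}, now retaining $\gamma_{21}^3$ as a non-trivial extra unknown, obtaining the analogues of (\ref{eq:diff1})--(\ref{eq:diff2}) together with additional differential equations controlling the derivatives of $\gamma_{21}^3$. Next, mimicking the integrable case but compensating for the non-vanishing $E_3$-component of $[E_1,E_2]$, I would choose a normalising function $\alpha$ and modified vector fields $T,U,V$ so that they still commute, thereby producing local coordinates $(u,v,t)$ on $M^3$; integrating the resulting system should yield explicit formulas for $\lambda$, $\alpha$, $\gamma_{11}^2$, $\gamma_{21}^2$, $\gamma_{11}^3$ and $\gamma_{21}^3$ in terms of a function on the $(u,v)$-slice together with an extra function of $t$ arising precisely from the non-integrability; this extra function is what will become the eigenfunction with Laplace--Beltrami eigenvalue $-4$ appearing in the converse part of \ref{MT2B}.

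Next I would compute the tangent vectors $x_u,x_v$ of the map $x$ from (\ref{mapx}) in these coordinates and extract their horizontal parts $\chi_u,\chi_v$ with respect to the Hopf fibration, observing that the extra $\gamma_{21}^3$ terms coming from $\nabla_{E_i}E_j$ contribute only to directions lying in $\mathcal{D}_2$ and in the vertical distribution of $\pi$. I would then lift the calculation to $\tilde{x}:S^3\times M^3\to S^{11}$ using the frame $X_1,X_2,X_3$ on $S^3$, define the tensor $\tilde{\mathcal{H}}$ as in (\ref{eq:H}), and identify it with a tensor $\mathcal{H}$ on $M^3$. Splitting $\mathcal{H}$ into the part $\sigma$ along $(E_1-iE_2)$ and $i(E_1-iE_2)$ and the part $\kappa$ along $jE_3$ and $kE_3$, I expect the same structural identities (\ref{eq:sigmakappa}) to emerge, from which anti-symmetry follows immediately after applying $\de\pi$.

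The main obstacle will be the bookkeeping in the second and third steps: the nowhere vanishing $\gamma_{21}^3$ injects genuinely new terms into every Lie bracket, into the tangent vectors $x_u,x_v$, and into the second covariant derivatives appearing inside $\tilde{\mathcal{H}}$, and one must verify that after the normalisations these additional contributions lie entirely in the vertical directions with respect to $\pi$ or in the already identified $\sigma$- and $\kappa$-components, so that no new normal direction appears that could destroy the anti-symmetry. Once this verification is carried out, the symmetries in (\ref{eq:sigmakappa}) are forced by exactly the same linear-algebraic manipulations as in the integrable case, and the conclusion follows.
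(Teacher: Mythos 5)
Your plan is correct and follows essentially the same route as the paper's proof: invoke \ref{MT1} for minimality and total complexity, then construct commuting fields $T,U,V$ (with $E_3$-corrections built from $w=\gamma_{11}^3/((\gamma_{11}^3)^2+(\gamma_{21}^3)^2)$ and $\beta_1=E_1(w)$, $\beta_2=E_2(w)$) to obtain coordinates $(u,v,t)$, lift to $S^3\times M^3$, form $\tilde{\mathcal{H}}$ and split the associated tensor $\mathcal{H}$ into $\sigma$ and $\kappa$, and deduce anti-symmetry from the relations (\ref{eq:hphi})--(\ref{eq:sigmakappa}). One small correction: the extra datum produced by non-integrability is a second function $C$ on the $(u,v)$-slice (satisfying $\Delta C=-2Ce^{-\frac{2}{3}F}$), not a function of $t$; the $t$-dependence of $\gamma_{11}^3$, $\gamma_{21}^3$ and $\lambda$ is explicit, and it is $C$ that later plays the role of the eigenfunction with eigenvalue $-4$.
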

\begin{proof}
As in the proof of Proposition \ref{prop:P1T2A}, we begin by defining local coordinates $(u,v,t)$ on the the minimal $\delta(2)$-ideal submanifold $M^3$, which will then be used to study the map $\psi$, as defined in \ref{MT1}. In a similar way as in \cite{Affinesfeer}, we define functions $w, \alpha_1, \alpha_2, \beta_1$ and $\beta_2$ on $M^3$ by
\begin{align*}
w&=\frac{\gamma_{11}^3}{(\gamma_{11}^3)^2+(\gamma_{21}^3)^2},\\
(\alpha_1+j\alpha_2)^3&=\frac{1}{\lambda(\gamma_{11}^3-j\gamma_{21}^3)^2},\\
\beta_1&=E_1(w),\\
\beta_2&=E_2(w).
\end{align*}
Remark that the functions $\alpha_1, \alpha_2$ are well-defined up to multiplication by $e^{\frac{2}{3}\pi j}$. Computing all the Gauss equations with respect to the frame $\left\{E_1, E_2, E_3 \right\}$, together with the fact that the Levi-Civita connection is torsion-less, yields analogous equations to equations (\ref{eq:diff1})-(\ref{eq:diff2}) found in Proposition \ref{prop:P1T2A}. These can then be used to show that the vector fields defined by 
\begin{align*}
T&=E_3,\\
U&=\alpha_1 E_1+\alpha_2E_2+(\alpha_1\beta_1+\alpha_2\beta_2)E_3,\\
V&=-\alpha_2 E_1+\alpha_1 E_2+(-\alpha_2\beta_1+\alpha_1\beta_2)E_3,
\end{align*}
satisfy $[T,U]=[T,V]=[U,V]=0$. Thus there exist local coordinates $(u,v,t)$ such that $~{\partial_u=U}$, $~{\partial_v=V}$ and $~{\partial_t=T}$. Similar to the proof of Proposition \ref{prop:P1T2A}, it is possible to find the following expressions:
\begin{align*}
\gamma_{11}^3&=\frac{-t}{t^2+C^2}, \\
\gamma_{21}^3&=\frac{-C}{t^2+C^2},\\
\lambda&=\frac{e^F}{\sqrt{t^2+C^2}},\\
\alpha_1^2&=-\alpha_2^2+e^{-\frac{2}{3}F}(t^2+C^2),
\end{align*}
with $C$ and $F$ functions on $M^3$, depending on the coordinates $(u,v)$ and satisfying the following system of differential equations
\begin{align}
\Delta C&=-2Ce^{-\frac{2}{3}F},\label{eq:LaplaceC}\\
\Delta F&=(3-6e^{F})e^{-\frac{2}{3}F}\label{eq:LaplaceD}.
\end{align}
Solving the differential equations for $\lambda,\beta_1$ and $\beta_2$ then yields expressions for the functions $\gamma_{11}^2,\gamma_{21}^2, \beta_1$ and $\beta_2$. 

Consider once again the map $\psi:M^3\rightarrow \mathbb{H}P^2$, as defined in \ref{MT1}. Because the distribution $\mathcal{D}_1$ is nowhere integrable, $\psi$ will yield a surface in $\mathbb{H}P^2$ due to \ref{MT1}, and one can show that the tangent vector fields $\psi_u$ and $\psi_v$ form a basis for the tangent bundle of this surface. More specifically, a calculation analogous to one in the proof of Proposition \ref{prop:P1T2A} shows that
\begin{align*}
g(\psi_u,\psi_u)&=g(\psi_v,\psi_v)=\frac{1}{2}e^{-\frac{2}{3}F},\;\;\;g(\psi_u,\psi_v)=0,
\end{align*}
where $g$ is the induced metric tensor on $\mathbb{H}P^2$, making $(u,v)$ once again isothermal coordinates on the surface $\psi(M^3)$. In a completely analogous manner as in the proof of Proposition \ref{prop:P1T2A}, one can also associate to the map $x:M^3\rightarrow S^{11}$, as defined in equation (\ref{mapx}), a (1,2) type tensor $\mathcal{H}$ with the second fundamental form $h$ of the surface $\psi(M^3)$ given by $\dd\pi(\mathcal{H)}$. Denote once again with $\sigma(X,Y)$ the part of $\mathcal{H}(X,Y)$ lying in the direction of $(E_1-E_2)$ and $i(E_1-iE_2)$ and with $\kappa(X,Y)$ the part lying in the direction of $jE_3$ and $kE_3$. One can then show that the tensors $\mathcal{H}, \sigma$ and $\kappa$ also satisfy the relations shown in equations (\ref{eq:H}) and (\ref{eq:hphi}), which were obtained when the distribution $\mathcal{D}_1$ is integrable but not totally geodesic. Note that even though the tensors $\sigma$ and $\kappa$ follow the same relations as before, their explicit formulation is more complicated: 
\begin{align*}
\sigma(\partial_u, \partial_u)&=\frac{1}{\sqrt{8}}\left(((\alpha_1\gamma_{12}^3-\alpha_2\gamma_{11}^3)^2-(\alpha_1\gamma_{11}^3+\alpha_2\gamma_{21}^3)^2)(E_1-iE_2)\right)\\ &\quad{}+\frac{1}{\sqrt{2}}(\alpha_1\gamma_{11}^3+\alpha_2\gamma_{21}^3)(\alpha_1\gamma_{21}^3-\alpha_2\gamma_{11}^3)i(E_1-iE_2),\\
\kappa(\partial_u, \partial_u)&=\lambda(\gamma_{11}^3(\alpha_2^2-\alpha_1^2)-2\alpha_1\alpha_2\gamma_{21}^3)jE_3
+\lambda(\gamma_{21}^3(\alpha_2^2-\alpha_1^2)+2\alpha_1\alpha_2\gamma_{11}^3)kE_3.
\end{align*}
Thus Definition \ref{def:anti-symm} then implies that $\psi(M^3)$ is once again minimal totally complex surface, while also being anti-symmetric.
\end{proof}
We are now able to formulate the proof of \ref{MT2B}.
\begin{proof}
The first part of this theorem has already been proven in Proposition \ref{prop:P1T2B}, meaning that we only need to prove the second part of this theorem. Let $N^2$ to be a minimal anti-symmetric totally complex surface in $\mathbb{H}P^2$. Proposition \ref{prop:charN2} then shows that $N^2$ can be characterized by a function $f$ satisfying differential equation (\ref{eq:charf}). The proof of \ref{MT2A} then shows that this equation is equivalent with equation (\ref{eq:LaplaceD}), being one of the equations characterizing a minimal $\delta(2)$-ideal Lagrangian submanifold in $\mathbb{C}^3$, with distribution $\mathcal{D}_1$ being nowhere integrable. Take $\tilde{f}$ to be an eigenfunction of the Laplace-Beltrami operator $\Delta_g$ with eigenvalue $-4$, meaning that $\Delta_g \tilde{f}=-4\tilde{f}$. This expression can be rewritten as $\Delta \tilde{f}=-2\tilde{f}e^{-\frac{2}{3}f}$, with $\Delta$ the usual Euclidean Laplacian. Note that this is exactly the same as equation (\ref{eq:LaplaceC}), being the other equation in the set of differential equations to characterize characterize a minimal $\delta(2)$-ideal Lagrangian submanifold in $\mathbb{C}^3$, with distribution $\mathcal{D}_1$ being nowhere integrable. 

Let $S$ then be an open domain in $\mathbb{R}^2$ and $f$ a function satisfying equation (\ref{eq:charf}) en $\tilde{f}$ an eigenfunction of the Laplace-Beltrami-operator with eigenvalue $-4$. One can now define functions $\gamma_{11}^2, \gamma_{21}^2, \gamma_{11}^3, \gamma_{12}^3, \lambda,\alpha_1, \alpha_2, \beta_1$ and $\beta_2$ on $S\times\mathbb{R}$ as described in the proof of Proposition \ref{prop:P1T2B}, by identifying the functions $f$ and $\tilde{f}$ with the functions $D$ and $C$ respectively. It is then possible to construct vector fields $E_1,E_2$ and $E_3$ on $S\times\mathbb{R}$ by
\begin{align*}
E_1&=\frac{\alpha_1}{\alpha_1^2+\alpha_2^2}\partial_u-\frac{\alpha_2}{\alpha_1^2+\alpha_2^2}\partial_v-\beta_1\partial_t,\\
E_2&=\frac{\alpha_2}{\alpha_1^2+\alpha_2^2}\partial_u+\frac{\alpha_1}{\alpha_1^2+\alpha_2^2}\partial_v-\beta_2\partial_t,\\
E_3&=\partial_t.
\end{align*}
Note that $\partial_u, \partial_v$ are the coordinate vector fields on $S$ and $\partial_t$ the coordinate vector field on $\mathbb{R}$. It is then possible to introduce a metric tensor $g$ on $S\times\mathbb{R}$ by imposing that the vector fields $E_1,E_2$ and $E_3$ form an orthonormal basis. By defining a symmetric $(1,2)$-tensor field $h$ on $S\times\mathbb{R}$ by
\begin{alignat*}{4}
&h(E_1, E_1)&&=\lambda jE_1, \qquad{} &&h(E_1, E_2)=-\lambda jE_2, \qquad{}&&h(E_1,E_3)=0,\\
&h(E_2, E_2)&&=-\lambda jE_1,\qquad{} &&h(E_2, E_3)=0,\qquad{} &&h(E_3,E_3)=0, 
\end{alignat*}
one can immerse $S\times\mathbb{R}$ in $\mathbb{C}^3$, with $j$ denoting the canonical complex structure on $\mathbb{C}^3$ by taking $h$ as the second fundamental form of $S\times\mathbb{R}$ in $\mathbb{C}^3$. It is straightforward to check that the distribution $\mathcal{D}_1=\spanned\left\{E_1, E_2 \right\}$ is nowhere integrable and that $S\times\mathbb{R}$ is a minimal $\delta(2)$-ideal Lagrangian immersion of $\mathbb{C}^3$.
\end{proof}

\section{Reformulation of the Main Theorems}\label{Section:reformulation}
In this section a one-to-one correspondence is given between minimal anti-symmetric totally complex surfaces in the quaternionic projective space $\mathbb{H}P^2$ and minimal Lagrangian surfaces in the complex projective space $\mathbb{C}P^2$. Applying this correspondence allows us to reformulate \ref{MT2A} and \ref{MT2B}. We first prove the following proposition.
\begin{proposition}\label{prop:minLagr}
There exists a one-to-one correspondence between minimal anti-symmetric totally complex surfaces in the quaternionic projective space $\mathbb{H}P^2$ and minimal Lagrangian surfaces in the complex projective space $\mathbb{C}P^2$.
\end{proposition}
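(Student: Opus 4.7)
The strategy is to reduce both sides to the same PDE via explicit local classifications, then identify them through an elementary substitution.

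First I would invoke Proposition \ref{prop:charN2}, which already encodes a minimal anti-symmetric totally complex surface $N^2 \subset \mathbb{H}P^2$ by a single smooth function $f$ on an open domain $S \subset \mathbb{R}^2$ satisfying (\ref{eq:charf}), with induced metric $g = \tfrac{1}{2}e^{-2f/3}(du^2+dv^2)$ in the isothermal coordinates $(u,v)$ produced in that proof. Thus the whole left-hand side of the desired correspondence is encoded analytically by a scalar PDE on an open planar domain.

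Second I would recall the classical local classification of minimal Lagrangian surfaces in $\mathbb{C}P^2(4)$. For such a surface the cubic Hopf-type differential built from the second fundamental form is holomorphic with respect to the induced conformal structure, so away from its zeros one can choose isothermal coordinates in which this cubic is normalized to $dz^3$. In these coordinates the induced metric takes the form $e^{2\omega}(du^2+dv^2)$, and the Gauss--Codazzi system collapses to a single Tzitz\'eica-type equation for $\omega$, schematically
\begin{align*}
\Delta \omega \;=\; -c_1\, e^{2\omega} + c_2\, e^{-4\omega},
\end{align*}
with positive constants $c_1,c_2$ determined by the normalization of the cubic differential and the ambient holomorphic sectional curvature. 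The surface is then locally determined, up to ambient isometry, by $\omega$.

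Third I would exhibit the change of variables tying the two PDEs together. The substitution $f = -3\omega + d$ converts $\Delta f = (3-6e^{2f})e^{-2f/3}$ into $\Delta \omega = -e^{2\omega - 2d/3} + 2 e^{-4\omega + 4d/3}$; the translation freedom $\omega \mapsto \omega + s$ and the rescaling freedom $(u,v)\mapsto(\alpha u, \alpha v)$ together allow the pair of coefficients to be matched with any prescribed positive $(c_1,c_2)$, so the two equations are equivalent. Under this substitution the conformal factor $\tfrac{1}{2}e^{-2f/3}$ is identified (up to the constant $\tfrac{1}{2}e^{-2d/3}$) with $e^{2\omega}$, so the two induced metrics coincide on the same domain $S$. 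Given $N^2 \subset \mathbb{H}P^2$, Proposition \ref{prop:charN2} yields $f$; the substitution gives $\omega$; the classical construction then builds a unique minimal Lagrangian surface in $\mathbb{C}P^2$ with that data. Conversely, an $\omega$ coming from a minimal Lagrangian surface in $\mathbb{C}P^2$ gives, via $f = -3\omega + d$, a solution of (\ref{eq:charf}) and hence (by the existence part of \ref{MT2A} and the proof of Proposition \ref{prop:charN2}) a minimal anti-symmetric totally complex surface in $\mathbb{H}P^2$.

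The main obstacle, in my view, will be cleanly matching normalizations: keeping track of the constants $c_1,c_2$, the factor $\tfrac{1}{2}$, the sign of the cubic differential, and the various conformal/coordinate freedoms so that the map between surfaces is genuinely well-defined and bijective rather than defined only up to ambiguities. A secondary issue is the zero set of the holomorphic cubic Hopf differential on the $\mathbb{C}P^2$ side, where the coordinate normalization $dz^3$ fails; this should be handled by noting that the zero set is discrete (by holomorphicity), so the correspondence extends by continuity, and both sides are real-analytic so agreement on an open set forces global agreement.
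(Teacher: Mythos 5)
Your proposal follows essentially the same route as the paper: both sides are reduced to the same Tzitz\'eica-type equation, the $\mathbb{H}P^2$ side via Proposition \ref{prop:charN2} and equation (\ref{eq:charf}), the $\mathbb{C}P^2$ side via the known characterization of minimal Lagrangian surfaces by a holomorphic cubic differential, and the two PDEs are identified by an affine substitution ($f=-3\omega+d$ in your notation, $v=-\tfrac13 f-\tfrac{\log 2}{2}$ in the paper's) together with the freedom to normalize the cubic differential. Your added remarks on matching constants and on the zero set of the cubic differential only make explicit details the paper handles by choosing coordinates with $Q$ constant, so the argument is correct and essentially the paper's.
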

\begin{proof}
It is possible to characterize an arbitrary minimal Lagrangian surface $L^2\subset \mathbb{C}P^2$ by a function $u$ solving the following differential equation
\begin{align}\label{eq:charu}
2u_{z\bar{z}}=-e^{2u}+|Q|^2e^{-4u}, 
\end{align}
where $z=x+iy$ and $\bar{z}=x-iy$ are complex coordinates on the surface and $Q$ a holomorphic cubic differential as shown in \cite{Lagrangian}. Note that one can choose the complex coordinates $z, \bar{z}$ in such a way that $Q$ can be taken to be any constant. Recall the definition of the Wirtinger derivatives as
\begin{align*}
\partial_z=\frac{1}{2}(\partial_x-i\partial_y),\; \partial_{\bar{z}}=\frac{1}{2}(\partial_x+i\partial_y).
\end{align*}
A straightforward calculation then yields that $\Delta u=4u_{z\bar{z}}$, where $\Delta$ is the usual Euclidean Laplacian.
Proposition \ref{prop:charN2} states that every minimal anti-symmetric totally complex surface $N^2\subset\mathbb{H}P^2$ is determined by a function $f$, which satisfies equation (\ref{eq:charf}). This equation reduces to 
\begin{align*}
2v_{z\bar{z}}=-e^{2v}+4e^{-4v}, 
\end{align*}
where the function $v$ is defined as $v=-\frac{1}{3}f-\frac{\log{2}}{2}$. Note that this is exactly the same differential equation as equation (\ref{eq:charu}), if the complex coordinates $z,\bar{z}$ are taken such that the holomorphic cubic differential $Q$ is normalized as $|Q|^2=4$. One thus has a one-to-one correspondence between minimal Lagrangian surfaces in $\mathbb{C}P^2$ and minimal anti-symmetric totally complex surfaces in $\mathbb{H}P^2$.
\end{proof}
Note that Proposition \ref{prop:minLagr} can be used to rewrite the two main theorems, as it shows that there is a one-to-one correspondence between minimal anti-symmetric totally complex surfaces in $\mathbb{H}P^2$ and minimal Lagrangian surfaces in $\mathbb{C}P^2$. One can then give an equivalent statement of \ref{MT2A}.
\begin{named}{Main Theorem 2A'}\label{MT2A'}
There exists a correspondence between minimal Lagrangian surfaces in $\mathbb{C}P^2$ and minimal $\delta(2)$-ideal Lagrangian submanifolds of $\mathbb{C}^3$, with integrable but not totally geodesic distribution $\mathcal{D}_1$ as defined in equation (\ref{eq:IntroD1}), and minimal Lagrangian surfaces in $\mathbb{C}P^2$.
\end{named}
Analogously one can rewrite \ref{MT2B} in the following manner. 
\begin{named}{Main Theorem 2B'}\label{MT2B'}
There exists a correspondence between minimal Lagrangian surfaces in $\mathbb{C}P^2$, together with an eigenfunction of the Laplace-Beltrami operator with eigenvalue $-4$, and minimal $\delta(2)$-ideal Lagrangian submanifolds of $\mathbb{C}^3$, with integrable but not totally geodesic distribution $\mathcal{D}_1$ as defined in equation (\ref{eq:IntroD1}).
\end{named}
\bibliographystyle{plain}
\bibliography{citationsHP}

\end{document}